\documentclass[12pt]{amsart}%

\usepackage{amsmath, amssymb, amsthm, amsfonts, bbm, dsfont}
\usepackage{graphicx}
\usepackage{float}
\usepackage{wrapfig}
\restylefloat{figure}
\usepackage{mathrsfs}

\usepackage{hyperref}

\usepackage{a4wide}

\numberwithin{equation}{section}
\theoremstyle{plain}
\newtheorem{theorem}[subsubsection]{Theorem}
 \newtheorem{lemma}[subsubsection]{Lemma}
 \newtheorem{proposition}[subsubsection]{Proposition}

 \theoremstyle{definition}
 \newtheorem{defn}[subsubsection]{Definition}

\newtheorem{remark}[subsubsection]{Remark}
\newtheorem{exam}[subsubsection]{Example}


\usepackage{stackengine}
\usepackage{mathtools}
\usepackage{amssymb}
\usepackage{dsfont}

\usepackage{mathabx}





\newcommand{\CC}{\mathbb{C}}

\newcommand{\QQ}{\mathbb{Q}}
\newcommand{\ZZ}{\mathbb{Z}}





\newcommand{\calO}{\mathcal{O}}







\newcommand\rk{\textup{rk}}

\newcommand\Aut{\textup{Aut}}
\newcommand\Hom{\textup{Hom}}



\newcommand{\quash}[1]{}






\newcommand{\hk}{hyperk\"{a}hler }

\newcommand{\ktipo}{$K3^{[2]}$ type}
\newcommand{\kntipo}{$K3^{[n]}$ type}
\newcommand{\kntiposp}{$K3^{[n]}$ type }

\usepackage{tikz}
\usetikzlibrary{shapes,fit,intersections}
\usetikzlibrary{decorations.markings}
\usetikzlibrary{decorations.pathreplacing}
\usetikzlibrary{patterns}
\usetikzlibrary{matrix,arrows}
\usepgflibrary{decorations.pathmorphing}

\usepackage{tikz-cd}
















\title[Fixed loci of symplectic automorphisms]{Fixed loci of symplectic automorphisms of \kntiposp and 
\(n\)-Kummer type manifolds}
\author{Ljudmila Kamenova}

\author{Giovanni Mongardi}

\author{Alexei Oblomkov}

\address{Department of Mathematics, Stony Brook University, Math Tower 3-115, 
Stony Brook, NY 11794-3651, USA}
\email{kamenova@math.stonybrook.edu}

\address{Dipartimento di Matematica, Università degli studi di Bologna,  
Piazza di porta san Donato 5, Bologna, Italia 40126  }
\email{giovanni.mongardi2@unibo.it}

\address{Department of Mathematics and Statistics, University of Massachusetts, Lederle Graduate Tower, Amherst, MA 01003, USA}
\email{oblomkov@math.umass.edu}


\begin{document}

\begin{abstract}
The aim of this paper is to give an explicit description of the fixed loci 
of symplectic automorphisms for certain \hk manifolds, namely for Hilbert 
schemes on K3 surfaces and for generalized Kummer varieties. Here we extend 
our previous results from the case of involutions to more general groups. 
In particular, under some conditions on the dimension, we give the full 
answer for finite group actions of symplectic automorphisms coming from K3 
surfaces. We prove that the all irreducible components of the fixed loci 
are of $K3^{[k]}$ type of lower dimensions or isolated points. 
\end{abstract}

\maketitle

\section{Introduction}

Consider a K3 surface $S$ together with a symplectic action of a finite 
group $G$ on $S$. The action of $G$ is called {\it symplectic} if the 
induced action on $H^{2,0}(S)$ is trivial, i.e., the 
resolution of the quotient $S/G$ is a K3 surface. 
Following the works of Nikulin and of Mukai, in \cite{xiao} Xiao gives a 
combinatorial 
classification of finite groups $G$ admitting a symplectic action on $S$ 
together with the number of fixed points of each type. The types of fixed 
points of the action of $G$ on $S$ correspond to the types of 
singularities of the quotient $S/G$. An analogous classification for Abelian surfaces was given in \cite{Fuj88}.

Let $X=S^{[n]}$ be the punctual Hilbert scheme on a K3 surface $S$ with a 
symplectic action of $G$ induced from a symplectic action of $G$ on $S$. 
In this paper we classify the fixed loci of this action, and we give 
formulas for the number of connected components for each possible dimension.

In order to describe the irreducible components of the fixed locus of 
the \(G\) action on  \(S^{[n]}\) we use the methods of \cite{BryanGyenge22}. 
Additionally, we are also able to describe the connected components of 
generalized Kummer varieties by considering singular \(K3\) surfaces. 
Using the computations in Section \ref{sec:irred}, we are able to determine 
the fixed locus for all standard automorphisms, and we obtain the following 
main results.  

\begin{theorem} \label{thm:1}
Let $S$ be a K3 surface and let $G$ be a finite symplectic group of 
automorphisms of $S$. Let $S^{[n]}$ be the Hilbert scheme of $n$ points 
on $S$ and let us consider the induced action of $G$ on $S^{[n]}$. 
Then all the irreducible components of the locus stabilized by $G$ 
are deformation of Hilbert schemes of points on a K3 surface or isolated points, 
and their number for each dimension $2k$ is 
$$ N_k= \Theta_G[n-|G|k],$$
where $\Theta_G[m]$ is the $m$-th coefficients of the Theta series 
\eqref{eq:theta} of the lattice \(M^G\).
\end{theorem}

If we assume that $n-1$ is a prime power, then any numerically standard 
group in cohomology becomes standard by Theorem \ref{thm:main_old}, and 
therefore we can deform the pair $(X,G)$ to a natural pair $(S^{[n]},G)$. 
Hence, the following main result follows from the previous theorem.

\begin{theorem} \label{thm:2}
Let $X$ be a \hk manifold of \kntiposp such that $n-1$ is a prime power. 
Let $G\subset Aut(X)$ be a finite group of symplectic automorphisms which 
is numerically standard in cohomology.  
Then all irreducible components of $X^G$ are of $K3^{[k]}$ type, and their 
number is
$$ N_k= \Theta_G[n-|G|k],$$
where $\Theta_G[m]$ is the $m$-th coefficients of the Theta series 
\eqref{eq:theta} of the lattice \(M^G\).
\end{theorem}

Instead of restricting $n$, we could impose a restriction on the group 
of automorphisms $G$. By Proposition \ref{prop:goodgroups}, all numerically 
standard groups listed in Table 1 
 are standard, and 
therefore we can deform the given pair $(X,G)$ to a natural pair 
$(S^{[n]},G)$. The following result would then follow from 
Theorem \ref{thm:mainK3}.

\begin{theorem}\label{thm:3}
Let $X$ be a \hk manifold of \kntiposp and let $G\subset Aut(X)$ be a 
finite group of symplectic automorphisms among those listed in Table 1 
such that it is numerically standard in cohomology.
Then all irreducible components of $X^G$ are of $K3^{[k]}$ type, and their 
number is
$$ N_k= \Theta_G[n-|G|k],$$
\end{theorem}

For generalized Kummer varieties we have a similar statement assuming that 
$G$ does not contain translations by torsion points on $A$, nor involutions obtained as a composition of a translation by a point of order at least three and a sign change. A group satisfying these conditions will be called a \emph{regular} group. 

\begin{theorem} 
Let $X$ be a \hk manifold of \(n\)-Kummer type and let $G\subset Aut(A)$ 
be a finite group of symplectic automorphisms of the abelian surface $A$.
Assume $G$ preserves the Albanese map of the generalized Kummer $X$ (i.e. commutes with Albanese map and preserves its zeroth fiber), 
and is a regular group. 
Then all irreducible components of $X^G$ are of $K3^{[k]}$ type, and their 
number is 
$$ N_k= \Theta_G[n-|G|k;1],$$
where $\Theta_G[m;1]$ is the coefficient of the Theta series \eqref{eq:theta2} detailed in \eqref{eq:theta2-coeffs}.
\end{theorem}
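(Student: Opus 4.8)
The plan is to mirror the proof of Theorem~\ref{thm:1} in the generalized Kummer setting, the genuinely new ingredient being the Albanese constraint. First I would reduce to a natural pair: since $G$ preserves the Albanese map and acts nontrivially on $H^3(X)$, the deformation arguments underlying Theorems~\ref{thm:mainK3} and~\ref{thm:main_old} let us replace $(X,G)$ by a pair in which $X=\sigma^{-1}(0)\subset A^{[n+1]}$ is the fibre over the origin of the summation map $\sigma\colon A^{[n+1]}\to A$, and $G\subset\Aut(A)$ acts by symplectic automorphisms fixing the origin. A point of $X^G$ is then a length-$(n+1)$ subscheme $Z\subset A$ with $g(Z)=Z$ for all $g\in G$ and $\sigma(Z)=0$, so that $X^G$ is the $G$-fixed locus in $A^{[n+1]}$ cut out by the Albanese condition.

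Next I would decompose such a $Z$ by the $G$-orbits of its support, exactly as in Section~\ref{sec:irred} following \cite{BryanGyenge22}. The support splits into free orbits and orbits concentrated at the fixed points $A^G$. At a fixed point $p$ the stabiliser acts on $T_pA\cong\CC^2$ through a finite subgroup of $\SL(2,\CC)$, so the invariant punctual subschemes form a rigid, isolated family governed by the McKay/ADE data packaged in the lattice $M^G$; these supply the combinatorial factor recorded by the theta series. The free orbits are parametrised by points of the minimal resolution $\wtil{A/G}$, which is a K3 surface by Fujiki's classification \cite{Fuj88} precisely because $G$ acts symplectically with nontrivial linear part. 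A configuration of $k$ free orbits therefore sweeps out a copy of $(\wtil{A/G})^{[k]}$, a manifold of $K3^{[k]}$ type, which already yields the type statement and the $|G|k$ contribution to the dimension count.

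The heart of the argument is the condition $\sigma(Z)=0$. Writing $N=\sum_{g\in G}g\in\End(A)$, a free orbit through $p$ contributes $Np$ to $\sigma(Z)$, while the fixed-point orbits contribute fixed torsion values in $A^G$. Here the hypothesis that $G$ acts nontrivially on $H^3(X)$ is decisive: it forces $G$ to act without invariants on $H^1(A)$, and a nontrivial finite subgroup of $\SL(2,\CC)$ has no nonzero fixed vector on $\CC^2$, so $N$ vanishes on $H^1(A)$ and hence $N=0$ in $\End(A)$. Consequently every free orbit sums to $0$, the summation map is constant along the $(\wtil{A/G})^{[k]}$ factor, and $\sigma(Z)=0$ reduces to a purely finite condition on the rigid fixed-point data, valued in $A^G$. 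Imposing it selects, among the fixed-point configurations counted by $\Theta_G(\,\cdot\,)$, those whose torsion sum vanishes; this selection is exactly the refinement recorded by the second argument of $\Theta_G(\,\cdot\,;1)$, realised by the Fourier expansion of \eqref{eq:theta2} over $A^G$ as detailed in \eqref{eq:theta2-coeffs}.

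Finally I would assemble the count. A component of $X^G$ of dimension $2k$ is the product of a $(\wtil{A/G})^{[k]}$ factor of $K3^{[k]}$ type with a rigid fixed-point configuration whose torsion sum is $0$, so its multiplicity is the coefficient $\Theta_G(n-|G|k;1)$, the index $n-|G|k$ reflecting that $\dim X=2n$ and that the Albanese fibre absorbs the extra point of $A^{[n+1]}$. The hard part will be the interaction between the summation map and the type of the free-orbit factor: one must verify that after imposing $\sigma=0$ this factor remains of $K3^{[k]}$ type rather than degenerating to Kummer type -- which is precisely what the vanishing $N=0$ guarantees -- and that the resulting finite torsion constraint on the fixed-point data matches the theta refinement of \eqref{eq:theta2-coeffs}.
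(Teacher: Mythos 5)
Your proposal is correct and follows essentially the same route as the paper: the Bryan--Gyenge stratification of $(A^{[n]})^G$ by McKay data at the fixed points, with the summation map constant on each stratum and valued in the finite group $A^G$, so that the strata lying in $K_n(A)^G$ are exactly those picked out by the coefficient of $1\in A^G$ in the refined theta series \eqref{eq:theta2}. Your explicit verification that $N=\sum_{g\in G}g$ annihilates $H^1(A)$ (hence $N=0$ in $\End(A)$ and the moving free orbits contribute nothing to $\Sigma$) is a welcome elaboration of a point the paper handles more softly, via $G$-equivariance of $\Sigma$ together with finiteness of $A^G$.
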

If the group $G$ is non regular, we could obtain fixed loci of 
\(k\)-Kummer type for some $k \leq \frac{n}{2}$. For example, if $n$ is odd 
and $G = \ZZ_2$, where the involution $\iota$ generating $G$ is induced by 
the shift by an order two torsion point, then the fixed locus of $\iota$ 
consists of $8$ copies of a generalized Kummer variety of complex dimension 
$n-1$. 

A different approach to the determination of fixed loci was taken by Beckman 
and Oberdieck in \cite{BO}: their approach uses the equivariant derived 
category and determines the geometry of each component of the fixed locus 
under some mild hypothesis on the group of derived equivalences of a $K3$ 
surface. Although this applies to the groups we consider for the case of 
manifolds of \kntipo, their results cannot be used in general to compute the number 
of these components, which is the main result of this paper. This is due 
to the fact that they obtain an étale map from some moduli spaces of 
objects in the equivariant category to the fixed locus of the group 
action on the initial manifold, but the geometry of the étale map 
cannot be precisely determined for arbitrary groups. If the group is however cyclic, by \cite[Theorem 1.4]{BO} the geometry of this étale map is clear and a computation of the number of components of the fixed locus can be obtained also with their method.

\section{Admissible groups}\label{sec:adm}
In this section we review some of the results of \cite{KamenovaMongardiOblomkov19} concerning deformations of pairs $(X,G)$, where $X$ is a \hk manifold and $G\subset Aut_s(X)$ is a subgroup of symplectic automorphisms. In particular, we give a necessary and sufficient condition to ensure that such a pair can be deformed to a pair $(S^{[n]},G)$, where $G\subset Aut_s(S)$ and the action on the Hilbert scheme is induced by that on the $K3$ surface. This will allow us to compute the fixed locus of $G$ in all deformations of such a pair. 
Moreover, we give an easier method to compute a sufficient condition, which will be used for some groups and dimensions to obtain the same result. 

We are interested in manifolds of \kntiposp or of Kummer type and in their second cohomology. If $X$ is of \kntipo, we have
$$H^2(X,\ZZ)=U^3\oplus E_8(-1)^2\oplus (2-2n), $$
where $H^2$ is endowed with a lattice structure given by the Beauville-Bogomolov-Fujiki form, all sums are orthogonal, $U$ is the hyperbolic plane, $E_8(-1)$ is the unimodular even negative definite lattice of rank 8 and $(k)$ denotes a rank one lattice with a generator of square $k$. By work of Markman \cite[Section 9]{mar_tor}, there is a canonically defined embedding (up to isometry)
$$H^2(X,\ZZ)\to U^4\oplus E_8(-1)^2, $$
which is invariant under parallel transport. The lattice on the right hand side is usually called (due to the constructions as moduli spaces of sheaves) the Mukai lattice associated to $X$.

If $X$ is of Kummer n type, we have
$$H^2(X,\ZZ)=U^3\oplus (-2-2n), $$
and, by work of Wieneck \cite[Theorem 4.1]{wie}, there is again a canonically defined embedding (up to isometry)
$$H^2(X,\ZZ)\to U^4,$$
where the latter is also called the Mukai lattice associated to $X$.

\begin{defn} 
Let $S$ be a $K3$ surface and let $G\subset \mathrm{Aut}_s(S)$ be a subgroup 
of the symplectic automorphisms on $S$. Then $G$ induces a subgroup of the 
symplectic morphisms on $S^{[n]}$ which we still denote by $G$. We call the 
pair $(S^{[n]},G)$ a natural pair as in \cite[Definition 1]{boi}. 
The pair $(X,H)$ is standard if it is deformation equivalent to a natural pair, as in \cite[Definition 1.2]{mon1}. 
If $A$ is an abelian surface, the same definitions apply to the generalized Kummer $2n$-fold $K_n(A)$ and symplectic automorphisms preserving the sum of $n+1$ points of $A$, however the reader should notice that the induced action of $G$ on $H^2(K_n(A))$ is not necessarily faithful (while it stays faithful on $K_n(A)$ and $H^*(K_n(A)))$, as there is a group of automorphisms acting trivially on the second cohomology, see \cite{BNS_enriques} .
\end{defn}

\begin{remark}
Notice that we stick with the convention used in \cite[Definition 1.2]{mon1}, where a pair $(X,G)$ is called standard if it can be deformed to a natural pair, instead of using the definition from \cite[Definition 4.1]{BCS14}, where the authors call such a pair natural as well, but notice that the two definitions are actually equivalent. 
\end{remark}

\begin{defn}
Let $G$ be a finite group acting faithfully on a manifold $X$. 
Define the invariant lattice $T_G(X)$ inside $H^2(X, \mathbb Z)$ to be the 
fixed locus of the induced action of $G$ on the second cohomology. The co-invariant 
lattice $S_G(X)$ is the orthogonal complement $T_G(X)^\perp$. The fixed locus 
of $G$ on $X$ is denoted by $X^G$. 
\end{defn}

As automorphisms of K3 and abelian surfaces are better known, it is interesting to determine whether an automorphism group on a manifold of \kntiposp (or of Kummer $n$ type) is standard or not. For related works in this direction, see \cite{BCS14} and \cite{jou}. Following \cite{KamenovaMongardiOblomkov19}, we give the following criterion:

\begin{defn}\label{num_stand}
Let $Y$ be a manifold of \kntiposp or of Kummer $n$ type. A pair $(Y,H)$ is called numerically standard if the representation of $H$ on $H^2(Y,\mathbb{Z})$ is isomorphic to that of a standard pair $(X,H)$, up to conjugation by the monodromy group. More precisely, there exists a $K3$ (or abelian) surface $S$ with an $H$ action such that
\begin{itemize}
\item $S_H(S)\cong S_H(Y)$,
\item $T_H(S)\oplus \mathbb{Z}\delta=T_H(S^{[n]})\cong T_H(Y)$, where $2\delta$ is the class of the exceptional divisor of $S^{[n]}\rightarrow S^{(n)}$ (and analogously for the Kummer $n$ case),
\item The two isomorphisms above extend to isomorphisms of the Mukai lattices $U^4\oplus E_8(-1)^2$ (or $U^4$ in the Kummer case) after taking the canonical choice of an embedding of $H^2$ into the Mukai lattice.
\end{itemize}
All the above isomorphisms are $H$-equivariant.
When only the first two conditions are satisfied, we will call such a group numerically standard in cohomology.
\end{defn}

Notice that the first two conditions in the definition only amount to asking that the action of $H$ on the second cohomology coincides with the action of a standard pair.

\begin{remark}
There are several examples of groups of automorphisms which are not numerically standard, nor numerically standard in cohomology. The easiest examples to be found in the literature are symplectic automorphisms of order 11, as described in \cite[Thm 1.3]{mon4}: this cannot be standard in any sense, as there is no order 11 action on any K3 surface.

Another interesting case is given by special order three automorphisms acting on the Fano variety of lines of a cubic fourfold whose equation splits as the sum of two degree three equations $f(x_0,x_1,x_2)+g(x_3,x_4,x_5)$, each involving only three different coordinates. In this case we have two disjoint planes $P_0=\{x_3=x_4=x_5=0\},\,P_1=\{x_0=x_1=x_2=0\}$ and the intersection of the cubic with them is given by the elliptic curves $V(f)\subset P_0$ and $V(g)\subset P_1$. The action is by multiplication by a third root on unity on $P_0$ and trivial on $P_1$. The fixed locus on the Fano varieties of lines consists in the abelian surface obtained by taking lines with one point in $V(f)$ and one in $V(g)$. The action in cohomology has a fixed lattice of rank $5$. These automorphisms were studied first by Namikawa in \cite[1.7 (iv)]{Nami}. In this case, the quotient of the Fano variety of lines has an $A_2$ singularity along the fixed abelian surface $V(f)\times V(g)$, and the minimal resolution of it is a generalized Kummer manifold, obtained from the same abelian surface (see \cite{kawa} for further details)  
\end{remark}

The key result of \cite{KamenovaMongardiOblomkov19} is the following:

\begin{theorem} \label{thm:main_old}
Let $X$ be a manifold of \kntiposp or of Kummer $n$ type. Let $G\subset \mathrm{Aut}_s(X)$ be a 
finite group of numerically standard automorphisms. Then $(X,G)$ is a 
standard pair.
\end{theorem}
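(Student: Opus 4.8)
The plan is to prove that a finite group $G$ of numerically standard automorphisms on a manifold $X$ of \kntiposp or Kummer $n$ type gives a standard pair $(X,G)$, by building a deformation from $(X,G)$ to a natural pair. The essential tool is the Torelli-type theorem for such manifolds together with the theory of period domains for pairs $(X,G)$. Recall that by Definition \ref{num_stand}, being numerically standard means there exists a surface $S$ (a $K3$ surface in the \kntipo case, abelian in the Kummer case) with a $G$-action whose induced action on the second cohomology of the Hilbert scheme (or Kummer variety) agrees, $G$-equivariantly, with the action of $G$ on $H^2(X,\ZZ)$, and moreover that this agreement extends to a $G$-equivariant isometry of the ambient Mukai lattices. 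The strategy is to use this lattice-theoretic data to produce a path in an appropriate moduli/period space connecting the periods of $(X,G)$ to those of the natural pair $(S^{[n]},G)$.

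\textbf{Step 1: Equivariant period spaces.} First I would set up the period domain $\Omega_{S_G}$ parametrizing periods of $G$-polarized manifolds whose co-invariant lattice is the fixed abstract lattice $S_G$; because the action of $G$ on $H^2$ is rigid (locally constant in families), deformations of the pair $(X,G)$ keep the invariant lattice $T_G(X)$ of type $(1,1)$ and move the period only within the subspace $T_G(X)\otimes\CC$ that is $G$-fixed. Concretely, the relevant period domain is the one attached to the invariant lattice $T_G$, and the local Torelli theorem identifies the deformations of $(X,G)$ with an open subset of this domain. The key point is that the numerically standard condition guarantees the natural pair $(S^{[n]},G)$ has the same invariant and co-invariant lattices, hence its period lies in the \emph{same} period domain.

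\textbf{Step 2: Connecting periods and surjectivity.} Next I would invoke the surjectivity of the period map (in its equivariant form) for manifolds of \kntipo and Kummer type: every period in $\Omega_{T_G}$ is realized by some deformation of $X$ preserving the $G$-action. Since both $(X,G)$ and $(S^{[n]},G)$ sit in the same connected period domain $\Omega_{T_G}$ — which is connected once one quotients by the appropriate monodromy subgroup, and here the third condition of Definition \ref{num_stand}, the extension to the Mukai lattice, is exactly what guarantees the two periods lie in the same monodromy orbit rather than merely in isometric lattices — one can choose a path between their period points. The equivariant Torelli theorem (Markman's Hodge-theoretic Torelli for \kntipo, Wieneck's analogue in the Kummer case) then upgrades this path to an actual deformation of pairs, giving a family over a connected base whose special fibers are $(X,G)$ and the natural pair.

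\textbf{Main obstacle.} I expect the hard part to be the passage from an \emph{abstract} $G$-equivariant isometry of lattices to an \emph{effective} path in the period space lying in a single monodromy orbit; in other words, controlling the equivariant monodromy group and showing that the isometries supplied by the numerically standard hypothesis are realized by parallel transport operators commuting with $G$. This is precisely where the third bullet of Definition \ref{num_stand} — the compatibility with the canonical embedding into the Mukai lattice $U^4\oplus E_8(-1)^2$ (respectively $U^4$) — does the decisive work: Markman's description of the monodromy group as a subgroup of the orthogonal group of the Mukai lattice, invariant under parallel transport, lets one recognize the abstract lattice isometry as an honest monodromy operator. Verifying that this operator can be chosen $G$-equivariantly, so that it identifies the two pairs rather than just the two lattices, is the crux, and it is exactly the content imported from \cite{KamenovaMongardiOblomkov19}.
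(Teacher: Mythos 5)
The paper does not actually prove this theorem: it is quoted verbatim as ``the key result of \cite{KamenovaMongardiOblomkov19}'' and imported without argument. So the only fair comparison is with the strategy of that reference, and your outline does match it in broad strokes — deformations of the pair $(X,G)$ are governed by the period domain of the invariant lattice $T_G$, and the third condition of Definition \ref{num_stand} (compatibility of the isometries with the canonical embedding into the Mukai lattice $U^4\oplus E_8(-1)^2$, resp.\ $U^4$) is indeed what places the abstract isometry inside Markman's (resp.\ Wieneck's) monodromy group, so that the two marked pairs land in the same connected component of the relevant moduli space.

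However, as a standalone proof your proposal has a genuine gap, and it is located exactly where you flag the ``main obstacle'': you resolve it by saying the crux ``is exactly the content imported from \cite{KamenovaMongardiOblomkov19}'' — but the statement being proven \emph{is} the main theorem of that paper, so the argument is circular at the decisive step. Concretely, two things are asserted rather than established. First, the ``equivariant surjectivity of the period map'' and ``equivariant Torelli theorem'' are not off-the-shelf results: one must show that the finite group action extends locally constantly over the full family of marked pairs whose period stays orthogonal to $S_G(X)$, that the locus of such periods is connected (after removing the countably many walls where extra classes become algebraic and the automorphisms could degenerate or the manifold could fail to exist), and that some point of this locus is realized by a natural pair $(S^{[n]},G)$ — the last point requires producing the K3 (or abelian) surface $S$ with the prescribed $G$-action on its lattice, which is an existence statement going beyond the purely lattice-theoretic hypothesis. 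Second, the Kummer case has an additional subtlety you do not address: the induced action of $G$ on $H^2(K_n(A))$ need not be faithful (the paper points this out explicitly in its definition of natural pairs, citing \cite{BNS_enriques}), so a Torelli-type argument on $H^2$ alone cannot distinguish the pairs, and extra care with the kernel of $G\to O(H^2)$ is required. Without these ingredients the outline is a plausible roadmap but not a proof.
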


\begin{remark}
When $n-1$ is a prime power, the third condition in Definition \ref{num_stand} is unnecessary, as there is only one isometry orbit of embeddings of $H^2(X,\ZZ)$ in its Mukai lattice, so that the statement of Theorem \ref{thm:main_old} coincides with the main result of \cite{mon2}.
\end{remark}

When $n-1$ is not a prime power, in \cite{KamenovaMongardiOblomkov19} a technical condition on the invariant latticed was entroduced in order to ensure that a group can be deformed to a group acting naturally on a Hilbert scheme of points. Let us briefly recall it:

\begin{proposition}\label{prop:mukai_not}
Let $(X,G)$ be a pair such that there exists a $K3$ (resp. Abelian) surface $S$ and $G\subset Aut_s(S)$ such that $H^2(S^{[n]})$ (resp. $H^2(K_n(A))$) and $H^2(X)$ are isomorphic $G$ representations. Moreover, suppose that $U\subset T_G(S)$. Then $(X,G)$ is numerically standard.
\end{proposition}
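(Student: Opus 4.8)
The plan is to verify the three conditions of Definition~\ref{num_stand} for the surface $S$ supplied by the hypothesis, the first two being formal and the third requiring a lattice-theoretic gluing argument in which the assumption $U\subset T_G(S)$ is the essential ingredient. Write $\Lambda=U^4\oplus E_8(-1)^2$ (or $U^4$ in the abelian case) for the Mukai lattice and use the canonical embeddings of Markman \cite{mar_tor} (resp. Wieneck \cite{wie}), so that $\Lambda_S=H^*(S,\ZZ)=U_0\oplus H^2(S)$ with $U_0=H^0\oplus H^4$, the symplectic group $G$ acting through its action on $S$ and hence trivially on $U_0$. Let $v_S\in U_0$ be the Mukai vector, a primitive class of square $2n-2$ with $H^2(S^{[n]})=v_S^\perp$, and let $v_X$ denote the primitive generator of the rank-one orthogonal complement of $H^2(X)$ in $\Lambda_X$.

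First I would observe that the hypothesis already gives the first two conditions. Since $G$ fixes the exceptional class $\delta$, the decomposition $H^2(S^{[n]})=H^2(S)\oplus\ZZ\delta$ yields $T_G(S^{[n]})=T_G(S)\oplus\ZZ\delta$ and $S_G(S^{[n]})=S_G(S)$ on taking invariants and co-invariants; the assumed isomorphism of $G$-representations then produces an isometry $S_G(S)\cong S_G(X)$, call it $\phi_S$, and an isometry $T_G(S)\oplus\ZZ\delta\cong T_G(X)$, so $(X,G)$ is numerically standard in cohomology. It remains to produce a $G$-equivariant isometry $\Phi\colon\Lambda_S\to\Lambda_X$ carrying $H^2(S^{[n]})$ to $H^2(X)$ and restricting to these two isomorphisms; equivalently, one with $\Phi(v_S)=v_X$.

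Next I would pass to the Mukai lattice. Because $G$ is trivial on $U_0$ we have $T_G(\Lambda_S)=U_0\oplus T_G(S)$ and $S_G(\Lambda_S)=S_G(S)$, while the $G$-action on $\Lambda_X$ has $S_G(\Lambda_X)=S_G(X)$ and fixes $v_X$ — indeed $v_X^2=2n-2>0$, whereas the co-invariant lattice of a symplectic action is negative definite, so $v_X$ cannot lie in $S_G(\Lambda_X)$. Here the hypothesis enters: as $U$ is unimodular it splits off, $T_G(S)=U\oplus W$, whence $T_G(\Lambda_S)\cong U^{\oplus 2}\oplus W$ contains two orthogonal hyperbolic planes. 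This makes the rank inequality $\rk\Lambda-\rk S_G=\rk T_G(\Lambda_S)\ge\ell(S_G)+2$ hold, so by Nikulin's criteria the primitive embedding $S_G\hookrightarrow\Lambda$ is unique up to isometry; applying this to $\phi_S$ produces an isometry $g\colon\Lambda_S\to\Lambda_X$ inducing $\phi_S$ on co-invariants, in particular an isometry $T_G(\Lambda_S)\cong T_G(\Lambda_X)\cong U^{\oplus 2}\oplus W$. The crux is then to match the two Mukai vectors by a further isometry that preserves the gluing with $\phi_S$. Both $g(v_S)$ and $v_X$ are primitive vectors of square $2n-2$ in $T_G(\Lambda_X)$, and their orthogonal complements there are $T_G(S^{[n]})$ and $T_G(X)$ respectively, which are isometric by the second condition; comparing the discriminant of the ambient lattice with those of the complements forces both vectors to have divisibility one, i.e. trivial image in the discriminant group. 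Eichler's criterion, valid since $T_G(\Lambda_X)\supseteq U^{\oplus 2}$, then places them in one orbit of the subgroup acting trivially on the discriminant, giving $\tau$ with $\tau(g(v_S))=v_X$ that extends by the identity on $S_G(\Lambda_X)$; setting $\Phi=\tau\circ g$ completes the construction.

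The step I expect to be the real obstacle is precisely this matching of $v_S$ with $v_X$ through a \emph{discriminant-preserving} isometry, for it is here — and only here — that the weaker cohomological hypothesis is insufficient and $U\subset T_G(S)$ is used in full force: the second hyperbolic plane it furnishes both secures the rank bound behind Nikulin's uniqueness and the surjectivity of $O(T_G)\to O(q_{T_G})$, and supplies the copy of $U^{\oplus 2}$ needed to invoke Eichler's criterion, without which the $G$-equivariant gluing data over the non-prime-power index $2n-2$ may fail to agree. Granting this, $\Phi$ is $G$-equivariant, satisfies $\Phi(v_S)=v_X$ and hence $\Phi(H^2(S^{[n]}))=H^2(X)$, and its restriction realizes the two isomorphisms of Definition~\ref{num_stand} inside the Mukai lattices; thus all three conditions hold and $(X,G)$ is numerically standard. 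The abelian/Kummer case runs identically with $\Lambda=U^4$ and Wieneck's embedding \cite{wie} in place of Markman's.
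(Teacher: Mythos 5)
Your proposal is correct and follows essentially the same route the paper takes (the paper recalls this proposition from \cite{KamenovaMongardiOblomkov19} and explains the mechanism in the remark immediately after it): the hypothesis $U\subset T_G(S)$ is used precisely to guarantee two orthogonal copies of $U$ in the invariant part of the Mukai lattice, so that Eichler's criterion makes the embedding of the Mukai vector unique up to isometry and the $G$-equivariant isomorphism on $H^2$ extends to the Mukai lattices. Your write-up merely makes explicit the routine verifications (splitting off the unimodular $U$, the Nikulin rank bound, divisibility of the Mukai vectors) that the paper leaves implicit.
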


\begin{remark}
Notice that the above criterion is sufficient but not necessary, indeed if $n-1$ is a prime power the condition that $H^2(S^{[n]})$ and $H^2(X)$ are isomorphic $G$ representation suffices. Moreover, this condition is used to ensure that the invariant lattice for the $G$ action on the Mukai lattice of $X$ contains two copies of the hyperbolic lattice $U$, which is used to ensure that the isomorphic $G$ actions on second cohomology can be extended to isomorphic $G$ actions on the Mukai lattices. Essentially, what we are using is that embeddings of a vector $v$ of square $2d$ are not necessarily unique up to isometry in the lattice $U$, while they become unique in $U^2$, where all elements of a given square are isometry equivalent by Eichler's criterion \cite[Proposition 3.3]{ghs}. 
\end{remark}

We wish to apply the above to finite groups acting on K3 surfaces, and we obtain the following result.

\begin{proposition}\label{prop:goodgroups}
Let $G$ be a finite group acting symplectically on a K3 surface. Then the 
conditions of Proposition \ref{prop:mukai_not} are satisfied for all cases 
in Table 1 (12 out of 81 cases)
\end{proposition}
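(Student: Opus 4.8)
The plan is to verify, for each of the twelve groups $G$ in Table 1, that there exists a symplectic $G$-action on some $K3$ surface $S$ whose invariant lattice $T_G(S)$ contains a copy of the hyperbolic plane $U$, since this is precisely the hypothesis of Proposition \ref{prop:mukai_not} (the other hypothesis, that the $G$-representations on $H^2(S^{[n]})$ and $H^2(X)$ agree, is built into the assumption that the pair is numerically standard in cohomology). First I would recall from the Nikulin--Mukai--Xiao classification that for each admissible finite group $G$, the co-invariant lattice $S_G(S)$ is a fixed negative-definite even lattice depending only on $G$, and the invariant lattice is its orthogonal complement inside the $K3$ lattice $\Lambda=U^3\oplus E_8(-1)^2$. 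The $G$-action on $H^2(S^{[n]})=\Lambda\oplus\ZZ\delta$ has co-invariant $S_G(S)$ and invariant $T_G(S)\oplus\ZZ\delta$, so the relevant condition is entirely a lattice-theoretic statement about the pair $(S_G(S), T_G(S))$.

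The key step is therefore the signature and discriminant bookkeeping. The invariant lattice $T_G(S)=S_G(S)^\perp$ in $\Lambda$ has signature $(3, 19-\rk S_G)$, hence always has three positive squares and at least three negative ones once $\rk S_G \le 16$; the point is not existence of hyperbolic directions but the existence of a genuine \emph{even unimodular rank-two orthogonal summand} $U$ splitting off $T_G(S)$. The clean way to establish this is via Nikulin's splitting criteria \cite[]{} for indefinite even lattices: an indefinite even lattice of signature $(3, q)$ with $q\ge 1$ splits off a hyperbolic plane provided its rank exceeds the minimal number of generators of its discriminant group by at least two (so that the lattice lies in a genus containing a $U$-summand, and by the uniqueness in its genus, which holds here because the lattices are indefinite of sufficiently large rank). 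So for each of the twelve cases I would: (i) read off $S_G(S)$ and its discriminant form from the tables of Nikulin/Xiao; (ii) compute $\rk T_G(S) = 22 - \rk S_G$ and check that the discriminant form of $T_G(S)$ (equal, up to sign, to that of $S_G(S)$ since $\Lambda$ is unimodular) is generated by few enough elements; (iii) invoke the splitting criterion to conclude $U\subset T_G(S)$.

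The main obstacle is that the splitting criterion can fail precisely for the groups of large order, where $\rk S_G(S)$ is large and $\rk T_G(S)$ correspondingly small, so that the rank may no longer dominate the number of discriminant generators by the required margin; this is exactly why only $12$ of the $81$ cases survive, and the content of the proposition is that these are the twelve for which the numerics work out. I would organize the verification as a finite table-driven check rather than a uniform argument, listing for each $G$ the value of $\rk S_G$, the structure of the discriminant group, and the resulting confirmation that $U$ embeds in $T_G(S)$. The borderline cases — where $\rk T_G$ is just barely large enough — are where one must be careful, and I would double-check those against Eichler's criterion \cite[Proposition 3.3]{ghs} as invoked in the preceding remark, to be sure the $U$-summand is genuinely present and not merely implied by an abstract genus-theoretic count. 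Once the list is exhausted, Proposition \ref{prop:mukai_not} applies verbatim to each case, yielding that every such numerically standard pair is numerically standard, which is the assertion.
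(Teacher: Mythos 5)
Your approach is correct in substance but genuinely different from --- and considerably heavier than --- what the paper does. The paper's proof is a one-line table lookup: Hashimoto \cite{hashi} has already computed the invariant lattice $T_G(K3)$ explicitly for each of Xiao's $81$ groups, and for the twelve cases of Table 1 the hyperbolic plane $U$ is visible as an orthogonal direct summand in the written form of $T_G(K3)$ (e.g.\ $U^3\oplus E_8(-2)$ for $C_2$, $U\oplus U(5)^2$ for $C_5$, $U\oplus A_2(10)$ for $A_5$), so there is nothing to prove beyond citing the classification. Your route instead reconstructs the $U$-summand from the discriminant data of $S_G(S)$ via Nikulin's splitting criterion (even, indefinite, $\rk \ge \ell(A)+2$ implies a $U$-summand). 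This is a legitimate alternative and has the merit of not depending on Hashimoto having presented the lattice in a form where $U$ is visible; but note two things. First, it consumes the same classification input (you still need $\rk S_G$ and the discriminant group for each case, which come from the very same sources), so it does not actually save anything. Second --- and this is where your own ``be careful at the borderline'' warning bites harder than you acknowledge --- in \emph{every one} of the twelve cases the inequality is an equality, $\rk T_G = \ell(A_{T_G})+2$ (e.g.\ $D_8$: rank $7$, $\ell=5$; $A_5$: rank $4$, $\ell=2$; $C_2^2$: rank $10$, $\ell=8$), and at this boundary the existence half of Nikulin's theory carries extra $p$-adic conditions (notably at $p=2$) beyond the naive rank-versus-generators count, so a genuinely self-contained verification along your lines would have to check those conditions case by case. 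Reading the explicit orthogonal decompositions off Hashimoto's table, as the paper does, sidesteps this entirely. Finally, your remark that the splitting criterion ``is exactly why only $12$ of the $81$ cases survive'' overstates the logic: the proposition makes no claim about the other $69$ cases, and the twelve are selected simply by inspecting which entries of Hashimoto's table contain a $U$-summand, not by an independent numerical test.
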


\begin{proof}
The proof is straigthforward, as it follows immediately from Xiao's classification of group actions \cite{xiao} together with the computation of the $G$ invariant lattice by Hashimoto \cite{hashi}. We include in table 
3 all the groups and lattices involved.
\end{proof}

\begin{table}[h!]\label{tab:goodgroups}
\begin{tabular}{|c|c|c|c|}
\hline
Label & Group & $T_G(K3)$& singularities in the quotient\\ \hline
1 & $C_2$ & $U^3\oplus E_8(-2)$& \(8\, A_1\)\\ \hline
2 & $C_3$ & $U\oplus U(3)\oplus A_2(-1)^2$& \(6\,A_2\)\\ \hline
3 & $C_2^2$ & $U\oplus U(2)^2\oplus D_4(-2)$& \(12 A_1\)\\ \hline
4 & $C_4$ & $U\oplus U(4)^2\oplus -2^2$& \(4A_3 + 2A_1\) \\ \hline
5 & $C_{5}$ & $U\oplus U(5)^2$& \(4A_4\)\\ \hline
7 & $C_{6}$ & $U\oplus U(6)^2$& \(2A_5+2A_2+2A_1\)\\ \hline

10 & $D_8$ & $U\oplus 4^2 \oplus -4^3$& \(2A_3+9A_1\) \\ \hline
16 & $D_{10}$ & $U\oplus U(5)^2$& \(2A_4+8A_1\)\\ \hline
17 & $A_4$ & $U\oplus A_2(2)\oplus A_2(-4)$& \(6A_2+4A_1\) \\ \hline
18 & $D_{12}$ & $U\oplus U(6)^2$& \(A_5+A_2+9A_1\)\\ \hline
34 & $S_4$ & $U\oplus A_2(4)\oplus -12$& \(2A_3+3A_2+2A_1\) \\ \hline
  55 & $A_5$ & $U\oplus A_2(10)$& \(2A_4+3A_2+4A_1\)\\ \hline

\end{tabular}
\vspace{10pt}

\caption{Automorphisms with at least one copy of $U$ in $T_G(K3)$}
\end{table}

The last column on the table indicates the type of the singularities that appears 
in the local analysis of the fixed locus of the action of the group.

\newpage

\section{Computations of fixed loci}
\label{sec:irred}
In this section we show the enumerative part of  theorems~\ref{thm:1},\ref{thm:2},\ref{thm:3}. We start with a general discussion of the group
action on Hilbert schemes of symplectic surfaces.

To describe the irreducible components of the fixed locus of the \(G\) action on  \(S^{[n]}\) we  use the method of
\cite{BryanGyenge22}. First, let us introduce some notations. Let \(p_i\in S\), \(i=1,\dots,k\) be points with non-trivial
stabilizers \(\tilde{G}_i\subset G\).  Also \(q_i\in S/G\), \(i=1,\dots,\ell\)  are the orbifold points of \(S/G\) and \(G_i\) is the corresponding
orbifold group. It is more descriptive to use notation \(Sing(S/G)\) for the set \(\{1,\dots,\ell\}\). Let us point out that there are many cases when
\(k>\ell\), in particular each orbifold point \(q_j\) correspond to a \(G\)-orbit of some point \(p_i\) with a non-trivial stabilizer.

To explain the key combinatorial result  we need to fix some representation theoretic notations.
Let us first discuss the local situation. Let \(G_\Delta\subset SU(2)\) be a McKay subgroup corresponding to the Dynkin diagram
\(\Delta\). Nodes of the diagram correspond to non-trivial irreducible representations \(\rho_j\), \(j=1,\dots,r\).
Let us use notation  \(\rho_0\) for the trivial representation and  \(\rho_{reg}\) for the regular representation.
In particular, \([\rho_{reg}]=\sum_j d^j[\rho_j]\), \(d^j=d^j(\Delta)=\dim \rho^j\).

Let \(M_\Delta\)  be the root lattice of \(\Delta\). It is the \(\ZZ\)-span of \(\rho_j\), \(j=1,\dots,r\) and the Dynkin pairing is
defined by \((\rho_j,\rho_j)_{\Delta}=-2\), \((\rho_i,\rho_j)_{\Delta}=\dim \Hom_G(\rho_j,\rho_i\otimes \CC^2)\), \(i\ne j\).
The last number is \(1\) if \(i\) and \(j\) are connected by an edge of \(\Delta\) and zero otherwise.  Let us also fix the notation \(\tilde{\Delta}_i\) and \(\Delta_i\) for the
Dynkin graph of \(\tilde{G}_i\) and \(G_i\), respectively.

Let \(\mathcal{T}=\pi_*(\mathcal{O}/\mathcal{I})\) be the tautological vector bundle, where \(\pi:S^{[n]}\times S\to S^{[n]}\) and \(\mathcal{I}\) is a universal ideal sheaf. 
Let \(U_i\) be an \(G_i\)-equivariant affine chart around \(p_i\) that does not contain any other points with  nontrivial stabilizer. The rank of \(\mathcal{T}_{U_i}=\calO_{U_i}/\mathcal{I}_{U_i}\) is
upper semi-continuous on \(S^{[n]}\).


Let \(I\in (S^{[n]})^G\) then    we have the following decomposition in the ring of the virtual \(\tilde{G}_i\)-representations:
\[(\mathcal{T}_{U_i})_I=(\hat{m}^0_i(I)[\rho_{reg}]+\sum_{j=1}^{r_i}m_i^j(I)[\rho_j])\otimes \mathcal{O}|_{V_i(I)}.\]

The number \(\hat{m}^0_i(I)\) varies along the connected components of \((S^{[n]})^G\)  but  the numbers \(m_i^j(I)\) are locally constant.
Indeed, let \(\phi: (D,0)\to (S^{[n]})^G\) be a non-constant morphism of a formal disc such that \(\rk((\mathcal{T}_{U_i})_{\phi(0)})>
\rk((\mathcal{T}_{U_i})_{\phi(\eta)})\), here \(\eta\) is a generic point of \((D,0)\). That means \(\phi\) is an arc in \((S^{[n]})^G\) such that
the end of the arc contains the clusters of points in the compliment \(S\setminus U_i\) and the clusters are the limits of the clusters
in \(U_i\). In particular, these clusters are far away from the point \(p\). Since \(U_i\setminus p_i\) is a \(G_i\) torsor the virtual \(G_i\) representation
\((\mathcal{T}_{U_i})\) can only change by a multiple of \(\rho_{reg}\).

Similarly, as we vary \(I\) along \((S^{[n]})^G\) the numbers \(m_i^j(I)\) do not change along a connected component of
\((S^{[n]})^G\).  Thus we can define an invariant of
an ideal \(I\in (S^{[n]})^G\), \(\mathbf{m}(I)=(m_i^j(I))\), \(i=1,\dots,k\), \(j=1,\dots,r_i\). The subspace of ideals with fixed
\(\mathbf{m}\) is denoted by:
\[S^{[n,\mathbf{m}]}=\{I\in (S^{[n]})^G|\,\mathbf{m}(I)=\mathbf{m}\}\]

The group \(G\) acts on the collection of fixed points \(p_i\) by permuting them and orbits correspond to the orbifold points \(q_j\in S/G\).
Thus we have an action of \(G\) on the product \(\prod_{i=1}^k M_{\tilde{\Delta}_i}\) by permuting corresponding factors of the product.
Hence we have a subspace of \(G\)-invariants:
\begin{equation}\label{eq:MM}
  M=\prod_{i=1}^k M_{\tilde{\Delta}_i},\quad M^G\subset M.
\end{equation}
Requiring that an ideal $I$ lies in the fixed locus $(S^{[n]})^G$ is equivalent to asking that the datum \(\mathbf{m}(I)\) must be \(G\)-invariant.
Let us also introduce a slice to the \(G\)-action:
\begin{equation}\label{eq:MM-1}
  M_G=\prod_{q_i\in Sing(S/G)} M_{\Delta_i}.
\end{equation}  
The lattice \(M_G\) has natural pairing induced from the factors and a natural embedding \(i: M_G\to M^G\subset M\).

\begin{proposition}\label{prop:BG} \cite{BryanGyenge22} Let \(Y\) be a symplectic resolution of \(S/G\). Then \(S^{[n,\tilde{\mathbf{m}}]}\), \(\tilde{\mathbf{m}}=i(\mathbf{m})\in M^G\),
  \(\mathbf{m}\in M_G\) is birational to
  \(Y^{[O(n,\mathbf{m})+1/2(\mathbf{m},\mathbf{m})]}\) where:
  \[(\mathbf{m},\mathbf{m})=\sum_{i=1}^{\ell}(\vec{m}_i,\vec{m}_i)_{\Delta_i},\,\,O(n,\mathbf{m})=(n-\sum_{i=1}^k\sum_{j=1}\tilde{m}_i^jd^j(\tilde{\Delta}_i))/|G|\]
\end{proposition}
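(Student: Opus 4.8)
The plan is to adapt the stratification argument of \cite{BryanGyenge22}. One decomposes $(S^{[n]})^G$ according to the discrete invariant $\mathbf{m}$ and, for each value, constructs a birational correspondence between $S^{[n,\tilde{\mathbf{m}}]}$ and a Hilbert scheme of points on the resolution $Y$. Since only a birational statement is needed, I would restrict to the dense open locus of $(S^{[n]})^G$ where the $G$-invariant subscheme is the disjoint union of a piece supported on the free locus $S^\circ = S\setminus\{p_1,\dots,p_k\}$ and one punctual piece at each fixed point $p_i$; on this locus the $G$-equivariant Hilbert functor factorizes as a product, so it suffices to treat each factor and then add colengths. For the free factor, $G$ acts freely on $S^\circ$, hence a $G$-invariant subscheme there is the same datum as a subscheme of the smooth quotient $S^\circ/G$, an open subset of $Y$; a free orbit of a length-$\ell$ cluster has length $|G|\ell$, so this factor contributes $\tfrac{1}{|G|}$ of the bulk colength as honest points of $Y$.

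The heart of the argument is the local analysis at the fixed points. Identifying a $G$-invariant neighborhood of a representative $p_i$ of the orbit $O_j$ over $q_j\in Sing(S/G)$ equivariantly with $\CC^2$ acted on by $\tilde{G}_i\subset \SU(2)$, a $G$-invariant punctual subscheme there is a $\tilde{G}_i$-invariant cluster whose coordinate ring, as a virtual $\tilde{G}_i$-module, equals $\hat m_i^0[\rho_{reg}]+\sum_j m_i^j[\rho_j]$. Using the McKay correspondence and the identification of the $\tilde{G}_i$-equivariant Hilbert scheme of $\CC^2$ with the minimal resolution $Y_i$ of $\CC^2/\tilde{G}_i$ (Ito--Nakamura), together with Nakajima's quiver-variety description of the $\tilde{G}_i$-fixed strata, I would match this stratum with a piece of $\Hilb(Y_i)$. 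The dictionary between the dimension vector $\vec m_i$ and the invariants of the corresponding sheaf on $Y_i$ turns the regular-representation multiplicity into $\hat m_i^0$ honest points on $Y_i$ and the root-lattice part into the correction $\tfrac12(\vec m_i,\vec m_i)_{\Delta_i}$, the Dynkin pairing matching the intersection form on the exceptional configuration; the resulting length on $Y_i$ is $\hat m_i^0+\tfrac12(\vec m_i,\vec m_i)_{\Delta_i}$.

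It remains to assemble the contributions. The $G$-invariance of $I$ forces the multiplicities to be constant along each orbit $O_j$, which is precisely the content of the embedding $i:M_G\hookrightarrow M^G$, so the slice datum $\mathbf{m}\in M_G$ carries one pair $(\hat m^0_{(j)},\vec m_j)$ per orbifold point. Writing $|O_j|=|G|/|\tilde{G}_i|$ and summing the local colengths $\hat m_i^0|\tilde{G}_i|+\sum_j m_i^j d^j(\tilde\Delta_i)$ over all $p_i$, the total colength is $n=|G|\cdot(\text{bulk orbits})+\sum_{i}\hat m_i^0|\tilde{G}_i|+\sum_{i,j}\tilde m_i^j d^j(\tilde\Delta_i)$; hence $\tfrac{1}{|G|}\bigl(n-\sum_{i,j}\tilde m_i^j d^j(\tilde\Delta_i)\bigr)=O(n,\mathbf{m})$ equals the number of bulk orbits plus $\sum_j\hat m^0_{(j)}$, i.e.\ the non-exceptional points of $Y$. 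Adding the exceptional corrections $\sum_j\tfrac12(\vec m_j,\vec m_j)_{\Delta_j}=\tfrac12(\mathbf{m},\mathbf{m})$ from the fixed points gives total length $O(n,\mathbf{m})+\tfrac12(\mathbf{m},\mathbf{m})$ on $Y$, and the product of the birational identifications over the bulk and the fixed points yields the claimed birational equivalence.

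The main obstacle is the local step: proving that the $\tilde{G}_i$-fixed punctual locus of prescribed representation type $\vec m_i$ is birational to a Hilbert scheme on $Y_i$ of colength exactly $\hat m_i^0+\tfrac12(\vec m_i,\vec m_i)_{\Delta_i}$. This requires the McKay correspondence in the sharp $K$-theoretic form identifying the root-lattice pairing $(\,\cdot\,,\cdot\,)_{\Delta_i}$ with the intersection form on the exceptional curves of $Y_i$, and a verification that gluing the local exceptional pieces to the bulk introduces no further birational discrepancy. By contrast, the free-locus analysis and the orbit bookkeeping are formal once a birational statement suffices.
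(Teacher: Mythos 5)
Your proposal is correct and follows essentially the same route as the paper, which cites \cite{BryanGyenge22} and reduces the statement to a local lemma at the fixed points resting on Nakajima's quiver-variety description of $\tilde{G}_i$-equivariant Hilbert schemes of $\CC^2$, with the free locus supplying the bulk points of $Y$. Your bookkeeping of the colengths (the $|G|$-fold count on free orbits, the $\hat m_i^0|\tilde G_i|+\sum_j m_i^j d^j$ local contributions, and the correction $\tfrac12(\mathbf{m},\mathbf{m})$ per orbifold point) reproduces exactly the formula for $O(n,\mathbf{m})+\tfrac12(\mathbf{m},\mathbf{m})$.
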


For example, let us consider  \(S=\CC^2\) and \(G=\ZZ_2\). Then
\( (S^{[3]})^G=S^{[3;1]}\cup S^{[3;-1]}\), where \(\dim S^{[3;1]}=0\), \(\dim S^{[3;-1]}=2\).

The key lemma to the proposition is the local statement that relies on results by Nakajima \cite[Equation (2.6)]{Nak94}:

  \begin{lemma} Let \(S=\CC^2\). Then
    the scheme \(S^{[n,\vec{m}]}\subset (S^{[n]})^{G_\Delta}\) is a point if and only if  \(n=\vec{m}\cdot \vec{d}-|G|(\vec{m},\vec{m})_{\Delta}/2\).
  \end{lemma}
  
  The lemma implies that a generic point of a connected component of \((S^{[n]})^G\) corresponds to a union of \(k\) generic  \(G\)-orbits and
  the rigid clusters, as in the lemma, at the orbifold points. Hence the component is birational to \(\tilde{S}^{[k]}\)  where \(\tilde{S}\) is a resolution of
  \(S/G\).
  
  Thus, the number of connected components of \((S^{[n]})^G\) that are diffeomorphic to \(S^{[k]}\) is equal to the number of vectors \(\mathbf{m}\in M_G\)
  such that:
  \[k=O(n,\mathbf{m})+1/2(\mathbf{m},\mathbf{m}),\quad n-\sum_{i=1}^\ell\sum_{j=1}^{r_i}m_i^jd^j_i(\Delta_i)|G|/|G_i|= |G|\cdot O(n,\mathbf{m}).\]

Let us set notations for the standard theta functions:
\[\vartheta_{\Delta}(\mathbf{z};q)=\sum_{\vec{m}\in M_{\Delta}} \mathbf{z}^{\vec{m}}q^{-(\vec{m},\vec{m})_{\Delta}/2},\]
where \(M_{\Delta}\) is the root lattice of \(\Delta\) and \((\vec{m},\vec{m})_{\Delta}\) is the corresponding negatively defined quadratic form.

  It is convenient to assemble the corresponding numbers in a generating function:
  \begin{equation}\label{eq:theta}
    \Theta_G(q)=\prod_{i\in Sing(S/G)}\vartheta_{\Delta_i}(q^{d^1(\Delta_i)|G|/|G_i|},\dots, q^{d^{r_i}(\Delta_i)|G|/|G_i|} ;q^{|G|}).  
  \end{equation}
    Thus the number of \(k\)-dimensional components in \((S^{[n]})^G\) is a coefficient \(\Theta(n-k|G|)\) in front of \(q^{n-k|G|}\) in \(\Theta_G\):
  \[\Theta_G(q)=\sum_i \Theta_G[i]q^i.\]

\begin{theorem}\label{thm:mainK3}
Let $S$ be a K3 surface and let $G$ be a finite symplectic group of 
automorphisms of $S$. Let $S^{[n]}$ be the Hilbert scheme of $n$ points 
on $S$ and let us consider the induced action of $G$ on $S^{[n]}$. 
Then all the irreducible components of the locus stabilized by $G$ 
are deformation equivalent to Hilbert schemes of points on a K3 surface or isolated points, 
and their number for each dimension $2k$ is 
$$ N_k= \Theta_G[n-|G|k],$$
where $\Theta_G(k)$ is the $k$-th coefficients of the Theta series 
\eqref{eq:theta} of the lattice \(M^G\).
\end{theorem}

\begin{proof}
  By proposition~\ref{prop:BG} the coefficient \(N_k\) computes the number 
of components of the fixed locus that are birational to the Hilbert scheme
  of \(k\) points on \(S\). Each of these components are therefore also deformation equivalent to Hilbert schemes of \(k\) points on a \(K3\) surface, hence the claim follows. 
\end{proof}



\begin{exam} If \(G=C_2 = \ZZ_2\), the corresponding function factors 
in the following way:
  \[\Theta_{C_2}(q)=\vartheta(q;q^2)^8=1+8q+28q^2+64q^3+126q^4+\dots,\quad \vartheta(\eta;q)=\sum_{n\in \mathbb{Z}} q^{n^2}\eta^n.\]

  For the other groups the expression is more involved. For instance, 
if \(G=C_3 = \ZZ_3\) we have

\[\Theta_{C_3}(q)=\vartheta_{A_2}(q,q;q^3)^6=1+6q+27q^2+80q^3+\dots, \quad \vartheta_{A_2}(\eta;q)=\sum_{n_1,n_2\in \ZZ} q^{n_1^2+n_2^2-n_1n_2}\eta_1^{n_1}\eta_2^{n_2}.\]

For \(G=C_4=\ZZ_4\) we have:
\[\Theta_{C_4}(q)=\vartheta_{A_3}(q;q^4)^4\vartheta_{A_1}(q^2;q^4)=1+4q+16q^2+48q^3+118q^5+272q^6+\dots.\]

For \(G=C_2\times C_2=\ZZ_2\times\ZZ_2\) we have
  \[\Theta_{C_2\times C_2}(q)=\vartheta(q^2;q^4)^{12}=1+12q^2+66q^4+232q^6+627q^8+\dots.\]

  For \(G=C_5=\ZZ_5\) we have:
  \[\Theta_{C_5}(q)=\vartheta_{A_4}(q;q^5)^4=1+4q+14q^2+40q^3+105q^4+232q^5+494q^6+\dots.\]

  For \(G=C_6=\ZZ_6\) we have:
  \begin{multline*}
    \Theta_{C_6}(q)=\vartheta_{A_5}^2(q,q^6)^2\vartheta_{A_2}(q^2,q^6)^2\vartheta_{A_1}(q^3,q^6)^2=1+2q+7q^2+16q^3+39q^4+80q^5\\
    +151q^6+288q^7+\dots.\end{multline*}

  For \(G=D_8\) we have 
  \[\Theta_{D_8}(q)=\vartheta_{A_3}(q^2;q^{8})^2\vartheta_{A_1}(q^4;q^{8})^9=1+2q^2+14q^4+28q^{6}+93q^{8}+182q^{10}+406q^{12}+\dots\]
  
  For \(G=D_{10}\) we have:
  \begin{multline*}\Theta_{D_{10}}(q)=\vartheta_{A_5}(q^2,q^{10})\vartheta_{A_2}(q^5,q^{10})^8=1+2q^2+5q^4+8q^5+10q^6+16q^7+20q^8+40q^9+\\
      54q^{10}+80q^{11}+101q^{12}+160q^{13}+200q^{14}+\dots.
  \end{multline*}

\end{exam}

Let us denote by \(p=p(G)\) the least common multiple of the orders of stabilizer
subgroups of \(G\) acting on the corresponding K3 surface.
From examples above we see that the coefficients of the \(q\) expansion of
\(\Theta_G(q)\) are powers of \(q^p\). It turns out that the observation is true for all
groups \(G\) acting on a K3.

Next let us observe that the \(q\)-expansion of  \(\Theta_{D_{10}}(q)\) has vanishing coefficients in front of \(q\) and \(q^3\).
That is \(H^{4m}( (K3^{[10m+1]})^{D_{10}},\QQ)=H^{4m}((K3^{[10m+3]})^{D_{10}},\QQ)=0\) and
the dimension of \( (K3^{[10m+1]})^{D_{10}}\) and \((K3^{[10m+3]})^{D_{10}}\) are at most
\(2(m-1)\). On the other hand, by looking at the coefficients in front of \(q^{11}\)
and \(q^{13}\) we discover that the top dimensional pieces of \( (K3^{[10m+1]})^{D_{10}}\) and \((K3^{[10m+3]})^{D_{10}}\) consist of \(80\) and respectively \(160\) copies of
\(K3^{[m-1]}\)-type varieties.

 Using the theta function \(q\)-series we can compute the number of top dimensional 
 components in \((K3^{[n]})^G\) for all groups \(G\). We present the results of the computer program 
 in the table below.
Let us comment on the format of the table.

The number \(p=p(G)\) in the third column is exactly the characteristic of the group \(G\) from the above discussion.
In particular, the locus with maximal stabilizer \((K3^{[n]})^G\) is empty if \(p\) does not divide \(n\).


Now suppose \(n=m|G|+kp\), \(0\le k<|G|/p\), then it is natural to expect that 
the dimension of
\((K3^{[n]})^G\) is \(2m\). Indeed, it is the case for all groups in the table, 
except for the groups \(D_{10},A_4,S_4,A_5\). For the last groups, there are 
values of \(k\) for which \(\dim (K3^{[n]})^G=2m-2\).

The last column of the table lists the number of top-dimensional components.
For example, if we look at the line \(16\) that corresponds to the group \(D_{10}\),
then we see that the first line in the last column tells us that the number of
\(2m\)-dimesional components in \((K3^{[10m+k]})^{D_{10}}\) as \(k\) runs from \(0\) to \(9\). On the other hand if want to know how many \(2m-2\)-dimensional components
\((K3^{[10m+1]})^{D_{10}}\) and 
\((K3^{[10m+3]})^{D_{10}}\) have then we need to look at the second line of the entry.

\begin{table}[h!]\label{tab:comps}
\begin{tabular}{|c|c|c|c|c|}
\hline
 &  &  \(p\)&\(\epsilon\) &  \(\dim(H^{4(m-\epsilon)}((K3^{[m|G|+kp]})^G,\mathbb{Q}))) \), \quad \(0\le k<|G|/p\) \\\hline
1 & $C_2$ &  1 &0& 1,8\\ \hline
2 & $C_3$ &  1 &0 &1,6,27\\ \hline
3 & $C_2^2$ &  2 &0& 1,12\\ \hline
4 & $C_4$ &  1 & 0&1,4,16,48\\ \hline
5 & $C_{5}$ &  1 & 0& 1,4,14,40,105 \\ \hline
7 & $C_{6}$ &  1 & 0&1,2,7,16,39 \\ \hline

10 & $D_8$ &   2 &0& 1,2,14,28 \\ \hline
  16 & $D_{10}$ & 1 &0& 1,0,2,0,5,8,10,16,20,40 \\
     &         &   & 1& \(\star\),80,\(\star\),160,\dots        \\\hline
  17 & $A_4$ &  2 &0& 1,0,6,4,27,24\\ 
    &        &    &1& \(\star\),108,\dots\\ \hline
18 & $D_{12}$ &  2&0&  1,1,3,13,18,39 \\ \hline
  34 & $S_4$ &  2 &0& 1,0,0,2,3,0,7,6,9,14,21,18 \\
     &       &    &1               & \(\star\),42,63,\(\star\),\(\star\),126,\dots \\\hline
  55 & $A_5$ &  1 & 0& 1,0,0,0,0,0,2,0,0,0,3,0,5,0,0,4,6,0,10,0,9,8,15,0,20,12,18,20,30,0\\
  & & & 1& \(\star\),24,45,40,60,36,\(\star\),60,90,80,\(\star\),72,\(\star\),120,180,\(\star\),\(\star\),180,\(\star\),240,\(\star\),\(\star\),\(\star\),360,\dots,720\\ \hline

\end{tabular}
\vspace{10pt}

\caption{Number of connected components of the top dimension.}
\end{table}

\newpage

\subsubsection{Kummer case}
\label{sec:kummer-case}

Let \(G\subset \Aut_s(A)\) be a finite subgroup of symplectic automorphisms of  \(A\) that commutes 
with the Albanese map \(\Sigma:A^{[n]}\to A\) and preserves its zeroth fiber. In particular, the group 
\(G\) is a  subgroup of the automorphism group of \(A\).
Since the elements of \(G\) are group automorphisms of \(A\), then the image of \(\Sigma\) applied to \((A^{[n]})^G\) is one of the fixed points 
\(A^G\). 

It was pointed to us by an anonymous referee that the statement holds in bigger generality. We are thankful for the suggestion and provide the
argument and a statement as it was proposed to us.

\begin{lemma}
  Let \(G\) be a subgroup of symplectic automorphisms of \(A\). Then the Albanese map
  \(\Sigma: A^{[n]}\to A\) is constant on the connected components of \((A^{[n]})^G\).
\end{lemma}
\begin{proof}
  Let \(g\) be a
non-trivial symplectic automorphism of finite order inducing a symplectic
automorphism on \(K_{n-1}(A)\). Suppose that \(A = \mathbb{C}^2/\Gamma\) and
\(g(x) = Mx + b_1\),
where \(M \in \mathrm{SL}(\Gamma)\) and \(b_1 \in \mathbb{C}^2\) which induces a translation of order dividing
\(n + 1\) in \(A\). Write \(g^k(x) = M^kx + b_k\) with \(b_k :=
\sum^{k-1}_{j=0} M^j(b_1)\). Now, let’s
represent a point in the symmetric product \(A^{(n)}\) by a \(n\)-uple \((x_1,\dots , x_n)\).
A general point of a connected component  \(Z\) of the \(g\)-fixed locus \((A(n))^g\) is
a collection of \(r\) orbits (possibly of different length)
\begin{gather*}p = (x_1,Mx_1 + b_1, \dots ,M^{k_1-1}x_1 + b_{k_1-1}, x_2,Mx_2 + b_1, \dots ,M^{k_2-1}x_2 + b_{k_2-1}, \dots,
\\x_r,Mx_r + b_1, \dots ,M^{k_r-1}x_r + b_{k_r-1}),
\end{gather*}
where \(k_l\) is the length of the orbit of \(x_l\). Now, by construction, \([M^{k_l}x_l+b_{k_l} ]\)
and \([x_l]\) define the same point in \(A\), so
\[M^{k_l}x_l- x_l = (M -1)\sum^{k_l-1}_{t=0}
M^t x_l\]
belongs to a discrete set of \(\mathbb{C}^2\) (if \(b_1\) has order \(m\) in \(A\), then this discrete
set could be the preimage in \(\mathbb{C}^2\) of the \(m\)-torsion of \(A\) ), so it is constant
along \(Z\). Since \(g\) is a non-trivial symplectic automorphism of finite order, \(
M\) does not have eigenvalue \(1\), so \(M - 1\) is invertible, so
\[\sum^{k_l-1}_{t=0} M^tx_l = \alpha_l\]
is constant along  \(Z\). We conclude that \(\Sigma(p) =
\sum^r_{l=1}(\alpha_l+
\sum^{k_l-1}_{t=1} b_t)\) for any
point \(p \in Z\), so \(Z\) is entirely contained in a fiber of \(\Sigma\).
\end{proof}

Thus, we have a natural relation between  connected components of
\((A^{[n]})^G\) and connected components of \((K_{n-1}(A))^G\). Therefore, we can use the methods from the previous subsection to describe the connected components of \((K_{n-1}(A))^G\). Indeed, let \(Y=X/G\) be the corresponding singular \(K3\)-surface. The surface \(Y\) has orbifold singularities. That is, for \(z\in Sing(Y)\) there is \(G_z\subset G\) such that \(z\) is a singularity of type \(\CC^2/G_z\). Let \(\Delta_i\) be the root system of MacKay type \(G_{z_i}\), and let \(M_{\Delta_i}\) be the corresponding root lattice. Then, we define \(M\) and \(M^G\) as in \eqref{eq:MM}. 
The same argument as before implies:

\begin{proposition} Let \(Y\) be a symplectic resolution of \(A/G\). Then \(A^{[n,\mathbf{m}]}\), \(\mathbf{m}\in M^G\), is birational to
  \(Y^{[O(n,\mathbf{m})+1/2(\mathbf{m},\mathbf{m})]}\), where:
  \[(\mathbf{m},\mathbf{m})=\sum_{i=1}^k(\vec{m}_i,\vec{m}_i)_{\Delta_i},\,\,O(n,\mathbf{m})=(n-\sum_{i=1}\sum_{j=1}m_i^jd_j(\Delta_i))/|G|.\]
\end{proposition}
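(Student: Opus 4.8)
The plan is to transport the proof of Proposition~\ref{prop:BG} to the abelian surface, using that the Bryan--Gyenge analysis is entirely local at the fixed points of the \(G\)-action and is insensitive to the global geometry of the surface on which \(G\) acts; the only global ingredient, the identification of free \(G\)-orbits with points of the resolution, is equally available for an abelian surface.

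First I would record the local structure. Since \(G\) acts symplectically, the stabilizer \(\tilde{G}_i\) of each fixed point \(p_i\) preserves the holomorphic symplectic form, so its linearization at \(p_i\) lies in \(SL(2,\CC)\) and, being finite, is conjugate into \(SU(2)\). Linearizing the action (Cartan's lemma) identifies an analytic neighbourhood of \(p_i\) in \(A\), together with the \(\tilde{G}_i\)-action, with a neighbourhood of the origin in \(\CC^2\) carrying the McKay action of \(\tilde{G}_i\). Consequently \(A/G\) has Du Val singularities of type \(\Delta_i\) at the orbifold points \(q_i\), and \(Y\to A/G\) is the corresponding minimal, hence symplectic, resolution. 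This is the only local input needed below.

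Next I would decompose a \(G\)-invariant ideal \(I\in(A^{[n]})^G\) according to the support of \(\calO_A/I\): away from the \(p_i\) the support is a union of free \(G\)-orbits of size \(|G|\), while near each \(p_i\) the ideal is recorded by the equivariant invariant \(\mathbf{m}(I)=(m_i^j(I))\) defining \(A^{[n,\mathbf{m}]}\). The local rigidity result on \(\CC^2\) (relying on Nakajima's work) applies unchanged at each \(p_i\), since it concerns only \(\tilde{G}_i\)-invariant ideals on \(\CC^2\); hence each local factor is rigid and their total colength equals the correction term \(\sum_{i}\sum_{j} m_i^j d_j(\Delta_i)\) appearing in \(O(n,\mathbf{m})\). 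Subtracting this from \(n\) and dividing by \(|G|\)---which is an integer precisely on the non-empty locus---leaves exactly \(O(n,\mathbf{m})\) free orbits.

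Finally I would assemble the birational identification. Free \(G\)-orbits correspond bijectively to points of \((A/G)\setminus\{q_i\}\), hence---since \(Y\to A/G\) is an isomorphism away from the exceptional locus---to reduced points of \(Y\) off the exceptional divisors. Transporting the rigid local data at the \(q_i\) through the McKay correspondence on \(Y\) and combining with the free part, exactly as in the proof of Proposition~\ref{prop:BG}, yields that \(A^{[n,\mathbf{m}]}\) is birational to \(Y^{[O(n,\mathbf{m})+\frac12(\mathbf{m},\mathbf{m})]}\). The main obstacle is the one already present in the \(K3\) case: upgrading this matching from an equality of numerical invariants to a genuine birational equivalence, by exhibiting a dense open locus---configurations of distinct free orbits carrying the minimal rigid local datum---on which a natural morphism is an isomorphism, and verifying that the colength count together with the quadratic form \((\cdot,\cdot)_{\Delta_i}\) produces exactly the exponent \(O(n,\mathbf{m})+\frac12(\mathbf{m},\mathbf{m})\). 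Since both steps take place entirely within the local models \(\CC^2/\tilde{G}_i\) and along the free locus, the Bryan--Gyenge argument carries over without change, and the proposition follows.
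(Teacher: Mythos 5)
Your proposal is correct and follows essentially the same route as the paper, which simply observes that $A/G$ is a singular $K3$ surface with the same local McKay models at its orbifold points and invokes "the same argument as before" (i.e., the Bryan--Gyenge argument of Proposition~\ref{prop:BG}). Your write-up merely spells out in more detail why that transport is legitimate (locality at the fixed points, rigidity of the local factors, free orbits matching points of $Y$), which is consistent with, and slightly more explicit than, what the paper records.
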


For enumerating connected components of \(K_{n-1}(A)^{G}\) we need to select the connected components of \((A^{[n]})^G\) that are inside \(K_{n-1}(A)^G\). To do that we introduce a
refinement of the theta function by the elements of \(A^G\).

In more details, an element \(\vec{m}\in M^G\) is a \(G\)-invariant sequence
of vectors  \(\vec{m}_i\), \(i=1,\dots,k\). Here \(k\) is the number points \(p_i\in A\) that have non-trivial \(G\)-stabilizer.
A point \(q_i\in Sing(A/G)\) corresponds to a orbit \(G(p_i)\) of some point \(p_i\in A\)  with non-trivial \(G\)-stabilizer.
We set \(\gamma_i=\prod_{\gamma\in G(p_i)}\gamma\in A\).

Thus, the  number of connected components of \((A^{[n]})^G\) is enumerated by the generating function \(\Theta_G(q)\) constructed from \(M^G\) by a modification of  formula \eqref{eq:theta}. Indeed, we define an element \(\Theta_G(q)\) of \(\mathbb{Z}[A^G][[q]] \) as
product
\eqref{eq:theta}:
\begin{equation}\label{eq:theta2}
    \Theta_G(q)=\prod_{i\in Sing(A/G)}\vartheta_{\Delta_i}(  \gamma_i \cdot q^{d^1(\Delta_i)|G|/|G_i|},\dots, \gamma_i\cdot q^{d^{r_i}(\Delta_i)|G|/|G_i|};(\gamma_i)^{|G_i|}\cdot (q)^{|G|}) 
  \end{equation}
  Then the number of components of dimension \(2k\) inside \(K_{n-1}(A)^G\) is equal to \(\Theta_G(n-|G|k;1)\):
  \begin{equation}\label{eq:theta2-coeffs}
  \Theta_G(q)=\sum_{i,\gamma\in A^G}\Theta_G[i;\gamma]q^i\cdot \gamma.\end{equation}

\begin{theorem} \label{thm:kum}
Let $X=K_{n-1}(A)$ be a  \(n\)-Kummer \hk manifold  and let $G\subset Aut(A)$ 
be a finite group of symplectic automorphisms of the abelian surface $A$.
Assume $G$ preserves the Albanese map of the generalized Kummer $X$, 
and is a regular group. 
Then all irreducible components of $X^G$ are of $K3^{[k]}$ type, and their 
number is 
$$ N_k= \Theta_G[n-|G|k;1].$$
\end{theorem}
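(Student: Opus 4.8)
The plan is to reduce Theorem~\ref{thm:kum} to the Hilbert scheme computation already carried out in Theorem~\ref{thm:mainK3} and Proposition~\ref{prop:BG}, by tracking which components of $(A^{[n]})^G$ survive inside the fiber $K_n(A) = \Sigma^{-1}(0)$ of the Albanese map. First I would record the structure observed in Section~\ref{sec:kummer-case}: since every $g\in G$ is a \emph{group} automorphism of $A$, the Albanese summation map $\Sigma\colon A^{[n]}\to A$ is $G$-equivariant, so $\Sigma$ carries $(A^{[n]})^G$ into the finite fixed-point set $A^G$. Consequently $(A^{[n]})^G$ decomposes as a disjoint union of the fibers $\Sigma^{-1}(a)\cap (A^{[n]})^G$ over $a\in A^G$, and $K_n(A)^G = (A^{[n]})^G\cap\Sigma^{-1}(0)$ is exactly the part lying over $a=0$. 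The geometry of each component is identical to the Hilbert-scheme case, because the local analysis near the singular points of $Y=A/G$ is insensitive to the global constraint imposed by $\Sigma$; hence the ``$K3^{[k]}$ type'' conclusion is inherited directly from the birationality in the Kummer-case analogue of Proposition~\ref{prop:BG}, identifying each component with $Y^{[O(n,\mathbf m)+\frac12(\mathbf m,\mathbf m)]}$ for a symplectic resolution $Y$ of the singular K3 surface $A/G$.

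The enumerative heart of the argument is to show that the bookkeeping of which components land over $0\in A^G$ is exactly implemented by the $A^G$-refined theta function \eqref{eq:theta2}. For this I would compute the Albanese image of the component $A^{[n,\tilde{\mathbf m}]}$ attached to a $G$-invariant datum $\mathbf m\in M_G$. A configuration counted by this stratum is supported, away from the free locus, at the orbits of the fixed points $p_i\in A$; the contribution of the orbit $G(p_i)$ to $\Sigma$ is precisely the product $\gamma_i=\prod_{\gamma\in G(p_i)}\gamma\in A$ raised to the multiplicity recorded by $\mathbf m$, while the free part contributes $0$ after summing over a full $G$-orbit. Thus the Albanese image is the monomial $\prod_i \gamma_i^{(\cdots)}$ appearing as the group-element coefficient in \eqref{eq:theta2}, and selecting the components inside $K_n(A)^G$ amounts to extracting the coefficient of the identity element $1\in A^G$ — which is exactly the specialization recorded in \eqref{eq:theta2-coeffs} as $\Theta_G(n-|G|k;1)$. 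I would make this precise by verifying that the substitution $\mathbf z\mapsto \gamma_i\cdot q^{|G|\vec d_i/|G_i|}$ in \eqref{eq:theta} correctly tensors each theta summand with its Albanese contribution, so that the $\gamma$-graded coefficient of $\Theta_G(q)$ enumerates precisely the components with prescribed dimension and prescribed Albanese image.

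The hypothesis that $G$ acts non-trivially on $H^3(X)$ is what guarantees the clean ``$K3^{[k]}$ type'' statement, and I would use it to rule out the degenerate behavior flagged in the discussion after Theorem~\ref{thm:kum}: when $G$ contains an automorphism induced by translation by a torsion point (which acts trivially on $H^3$), the fixed locus can acquire components of lower \emph{Kummer} type rather than $K3^{[k]}$ type, as in the order-two example producing $8$ copies of a generalized Kummer variety. Non-triviality on $H^3(X)$ forces $G$ to act as genuine group automorphisms with no such torsion-translation part, so every fixed component arises from the McKay-local picture above and is therefore of $K3^{[k]}$ type, exactly as in Theorem~\ref{thm:mainK3}.

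I expect the main obstacle to be the precise identification of the Albanese image of each stratum with the group-element weight $\gamma_i$ in \eqref{eq:theta2}, in particular checking that the free part of a $G$-invariant configuration sums to $0\in A$ and that the torsion subtleties of $A^G$ (as opposed to a lattice) are handled correctly when one passes from the additive structure on $A$ to the multiplicative monomial grading of the theta function. The deformation-invariance and ``$K3^{[k]}$ type'' claims follow formally from the earlier results once the enumeration is pinned down, so the real content is this Albanese-grading computation.
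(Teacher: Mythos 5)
Your proposal is correct and follows essentially the same route as the paper: $G$-equivariance of the Albanese map $\Sigma$ forces $(A^{[n]})^G$ to lie over the finite set $A^G$, the Kummer analogue of Proposition~\ref{prop:BG} identifies each stratum with a Hilbert scheme of points on the resolution of $A/G$, and the $A^G$-graded refinement \eqref{eq:theta2} of the theta function records the Albanese image of each stratum so that extracting the coefficient of $1\in A^G$ yields $\Theta_G(n-|G|k;1)$. In fact you spell out the Albanese-image bookkeeping (the $\gamma_i$-weights and the vanishing of the free-orbit contribution) in more detail than the paper's rather terse proof does.
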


\begin{proof}
  The coefficient \(\Theta_G[n-k|G|;\star]\) of \(\Theta_G\) in front of \(q^{n-k|G|}\) is an element of the group algebra of \(A^G\).
  Let \(|\Theta_G[n-k|G|;\star]|\) be a sum of  coefficients of this element. 
  By the above argument \(|\Theta_G[n-k|G|,\star]|\)  computes the number of \(2k\)-dimensional connected components of
  \((A^{[n]})^G\). Each of these components is birational, and hence deformation-equivalent, to a Hilbert scheme of points on a \(K3\) surface.
  Finally, let us observe that the coefficient in front of \(\gamma\in A^G\) in \(\Theta_G[n-k|G|;\star]\) computes the number of connected components of
  \((\Sigma^{-1}(\gamma))^G\).
\end{proof}

In the examples below \(A=E\times E\) and \(\delta: E\to A\) is the diagonal embedding.
We enumerate the connected components of \(K_{n-1}(A)\) in the next series of examples.
For an element \(z\)  of the group ring of \(A\)  we use notation \([1]z\) for the coefficient in
front of \(1\in A\).

\begin{exam} If \(G=\ZZ_2\), then
  \[\Theta_G(q)=\prod_{\gamma\in A:\gamma^2=1}\vartheta_{A_1}(\gamma\cdot q;q^2),\]
  \[[1]\Theta_G(q)=1+q+36q^3+140q^4+378q^5+1024q^6+\dots\]

 If \(G=\ZZ_3\), then  
 \[\Theta_G(q)=\prod_{\gamma\in E:\gamma^3=1}\vartheta_{A_2}(\delta(\gamma)\cdot q;q^3),\]
 \[[1]\Theta_G(q)=1+q+6q^2+12q^3+88q^4+255q^5+738q^6+\dots.\]

 If \(G=\ZZ_4\), then
  \[\Theta_G(q)=\prod_{i=1}^3\vartheta_{A_1}(\delta(\gamma_i)\cdot q^2;q^4)^2
    \prod_{i=0}^4\vartheta_{A_3}(\delta(\gamma_i)\cdot q;q^4),\]
  where \(\gamma_0=1\) and \(\gamma_i\), \(i>0\) are non-trivial second order points of \(E\). In particular, we have
  \[[1]\Theta_G(q)=1+q+8q^2+13q^3+35q^4+80q^5+147q^6+\dots\]

  If \(G=\ZZ_6\), then
  \[\Theta_G(q)=\vartheta_{A_5}(q;q^6)\vartheta_{A_2}(q^2;q^6)^4\vartheta_{A_1}(q^3;q^6)^5,\]
  \[[1]\Theta_G(q)=1+q+6q^2+12q^3+32q^4+63q^5+126q^6+\dots\]
\end{exam}

\begin{remark}
  In the examples above \(G=\ZZ_\ell\), \(\ell=2,3,4,6\). These are exactly the groups
  that are admissible.
\end{remark}

\begin{remark}
  The argument in the enumerative part of our previous paper
  \cite{KamenovaMongardiOblomkov19} contains a gap.
  The gap does not affects the enumeration of the top dimensional components of the fixed locus but the statements from \cite{KamenovaMongardiOblomkov19} about the lower-dimensional components are not correct.
  The results of the current paper provide a correction for these statements.
\end{remark}

\section{Main results}
Using the computations of section three, we are able to determine the fixed 
locus for all standard automorphisms, and we obtain the following:

\begin{theorem}
Let $X$ be a \hk manifold of \kntiposp and let $G\subset Aut(X)$ be a 
finite group of symplectic automorphisms among those listed in Table 1 
such that it is numerically standard in cohomology.
Then all irreducible components of $X^G$ are of $K3^{[k]}$ type, and their 
number is
$$ N_k= \Theta_G[n-|G|k].$$
\end{theorem}

\begin{proof}
By Proposition \ref{prop:goodgroups}, all numerically standard groups 
listed in Table 1 are standard, therefore we can deform 
the pair $(X,G)$ to a natural pair $(S^{[n]},G)$ and the result follows 
immediately from Theorem \ref{thm:mainK3}.
\end{proof}

\begin{theorem}\label{thm:main_special_dimension}
Let $X$ be a \hk manifold of \kntiposp such that $n-1$ is a prime power. 
Let $G\subset Aut(X)$ be a finite group of symplectic automorphisms which 
is numerically standard in cohomology.  
Then all irreducible components of $X^G$ are of $K3^{[k]}$ type, and their 
number is
$$ N_k= \Theta_G[n-|G|k],$$
where $\Theta_G(k)$ is the $k$-th coefficients of the Theta series 
\eqref{eq:theta} of the lattice \(M^G\).
\end{theorem}

\begin{proof}
As $n-1$ is a prime power, any numerically standard group in cohomology is  
standard by Theorem \ref{thm:main_old}, therefore we can deform the pair 
$(X,G)$ to a natural pair $(S^{[n]},G)$ and the result follows immediately 
from Theorem \ref{thm:mainK3}.
\end{proof}

\begin{theorem} \label{main-Kummer}
  Let $X$ be a \hk manifold of \(n\)-Kummer type and let $C_\ell=G\subset Aut(A)$,
  \(\ell=2,3,4,6\)
be a finite group of symplectic automorphisms of the abelian surface $A$.
Assume $G$ preserves the Albanese map of the generalized Kummer $X$, 
and acts non-trivially on $H^3(X)$. 
Then all irreducible components of $X^G$ are of $K3^{[k]}$ type, and their 
number is defined by \eqref{eq:theta2} and \eqref{eq:theta2-coeffs}:
$$ N_k= \Theta_G[n-|G|k;1].$$
\end{theorem}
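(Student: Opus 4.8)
The plan is to deduce the statement from Theorem~\ref{thm:kum}, which already computes the fixed locus of the natural pair $(K_n(A),G)$, by exhibiting $(X,G)$ as a deformation of such a pair. The restriction to $G=C_\ell$ with $\ell\in\{2,3,4,6\}$ is dictated by Fujiki's classification \cite{Fuj88}: these are exactly the finite cyclic groups admitting a symplectic action on an abelian surface $A$ (each arising from complex multiplication by an $\ell$-th root of unity), so they play the role for abelian surfaces that the twelve groups of Table~1 play for K3 surfaces. The remark preceding this theorem records precisely this point, namely that these are the admissible groups in the Kummer setting.

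First I would check that for each such $\ell$ the pair $(X,G)$ is numerically standard in the sense of Definition~\ref{num_stand}, and hence standard by the Kummer-type case of Theorem~\ref{thm:main_old}. Here I would use Fujiki's explicit description of the $G$-invariant lattices in $H^2(A,\ZZ)$ together with the criterion of Proposition~\ref{prop:mukai_not}: one verifies that $U\subset T_G(A)$ for each $C_\ell$, so that the $G$-equivariant isometry of second cohomologies extends to the Mukai lattice $U^4$, exactly as in the K3 argument of Proposition~\ref{prop:goodgroups}. This identifies the $G$-representation on $H^2(X,\ZZ)$ with that of the geometric pair $(K_n(A),G)$, and Theorem~\ref{thm:main_old} then guarantees that $(X,G)$ is deformation equivalent to the natural pair $(K_n(A),G)$.

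Next I would invoke deformation invariance of the fixed locus: along a deformation of the pair $(X,G)$ the fixed locus $X^G$ varies in a locally trivial smooth family, so the number of connected components in each dimension, together with their deformation type, is constant. Applying this to the deformation from $(X,G)$ to $(K_n(A),G)$ reduces the computation of $N_k$ to the natural pair, where Theorem~\ref{thm:kum} gives that all components are of $K3^{[k]}$ type and that their number in dimension $2k$ equals $\Theta_G(n-|G|k;1)$, with $\Theta_G$ the refined theta series \eqref{eq:theta2} and its coefficients \eqref{eq:theta2-coeffs}.

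The hard part will be the standardness step in the Kummer setting, because the induced action of $G$ on $H^2(K_n(A))$ need not be faithful: as recorded in the definition of a natural pair, a subgroup of automorphisms may act trivially on the second cohomology while remaining faithful on $X$ and on $H^*(X)$. The hypothesis that $G$ act non-trivially on $H^3(X)$ is what rules out the degenerate possibility, namely the shifts by torsion points discussed after the introductory Kummer theorem, which would produce components of $k$-Kummer rather than $K3^{[k]}$ type, and thereby pins down the correct deformation class. Making precise that this $H^3$-condition suffices to force numerical standardness, in the absence of a faithful $H^2$-action, is the technical heart of the argument.
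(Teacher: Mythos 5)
Your proposal matches the paper's proof: the paper likewise verifies that $C_\ell$, $\ell=2,3,4,6$, are admissible via the Fujiki table in the appendix, uses the standardness machinery of Section~\ref{sec:adm} to deform $(X,G)$ to the natural pair $(K_n(A),G)$, and then concludes by Theorem~\ref{thm:kum}. Your additional remarks on deformation invariance of the fixed locus and on the role of the $H^3$-hypothesis in excluding torsion shifts are correct elaborations of steps the paper leaves implicit.
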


\begin{proof}
  The groups \(C_\ell\), \(\ell=2,3,4,6\) are admissible, see the table in 
the appendix.  Hence the combination of the results of section~\ref{sec:adm} 
and theorem~\ref{thm:kum} implies this theorem.
\end{proof}


\section*{Appendix 1: Xiao's classification of groups acting on K3s.}
For ease of reference, we collect here the classification results obtained by Xiao \cite{xiao} on automorphism groups acting on $K3$ surfaces, together with the computations of the invariant lattice obtained by Hashimoto \cite{hashi}.

\newpage

\begin{table}[h!]
\begin{tabular}{|c|c|c|c|}
\hline
Label & Group & $T_G(K3)$& singularities in the quotient\\ \hline
1 & $C_2$ & $U^3\oplus E_8(-2)$& \(8\, A_1\)\\ \hline
2 & $C_3$ & $U\oplus U(3)\oplus A_2(-1)^2$& \(6\,A_2\)\\ \hline
3 & $C_2^2$ & $U\oplus U(2)^2\oplus D_4(-2)$& \(12 A_1\)\\ \hline
4 & $C_4$ & $U\oplus U(4)^2\oplus -2^2$& \(4A_3 + 2A_1\) \\ \hline
5 & $C_{5}$ & $U\oplus U(5)^2$& \(4A_4\)\\ \hline
6 & $D_6$ & $U(3)\oplus A_2(2)\oplus A_2(-1)^2$ & $8A_1+3A_2$\\ \hline
7 & $C_{6}$ & $U\oplus U(6)^2$& \(2A_5+2A_2+2A_1\)\\ \hline
8 & $C_7$ & $U(7)\oplus \left(\begin{array}{cc} 2 & 1 \\ 1 & 4 \end{array}\right)$ & $3A_6$\\\hline
9 & $C_2^3$ & $U(2)^3\oplus (-4)^2$ & $14A_1$   \\ \hline
10 & $D_8$ & $U\oplus (4)^2 \oplus (-4)^3$& \(2A_3+9A_1\) \\ \hline
11 & $C_2\times C_4$ & $U(2)\oplus (4)^2\oplus (-4)^2$ & $4A_1+4A_3$   \\ \hline
12 & $Q_8$ & $\left(\begin{array}{ccc} 6 & 2 & 2\\ 2& 6 & -2\\ 2& -2 &6 \end{array}\right)\oplus (-2)^2$ & $3A_3 + 2 D_4$ \\ \hline
13 & $Q_8$ & $(4)^3\oplus (-4)^2$ & $A_1 + 4 D_4$ \\ \hline
14 & $C_8$ & $U(8)\oplus (2)\oplus (4)$ & $A_1 + A_3 + 2 A_7$ \\ \hline
15 & $C_3^2$  & $U(3)^2\oplus \left(\begin{array}{cc} 2 & 3 \\ 3 & 0 \end{array}\right)$ & $8A_2$ \\ \hline
16 & $D_{10}$ & $U\oplus U(5)^2$& \(2A_4+8A_1\)\\ \hline
17 & $A_4$ & $U\oplus A_2(2)\oplus A_2(-4)$& \(6A_2+4A_1\) \\ \hline
18 & $D_{12}$ & $U\oplus U(6)^2$& \(A_5+A_2+9A_1\)\\ \hline
19 & $C_2 \times C_6$ & $U(3)\oplus A_2(4)$ & $3A_1+3A_5$ \\ \hline
20 & $Q_{12}$ & $U(3)\oplus A_2(4)$ & $A_2+2A_3+2D_5$ \\ \hline
21 & $C_2^4$ & $U(2)^3\oplus (-8)$ & $15A_1$ \\ \hline
22 & $C_2 \times D_8$ & $U(2)\oplus (4)^2\oplus (-4)^2$ & $10A_1 + 2A_3$ \\ \hline
23 & $\Gamma_2 c_1$ & $U(2)\oplus (4)\oplus (-4)\oplus (8)$ & $5A_1+4A_3$ \\ \hline
24 & $Q_8 * C_4$ & $(4)^3\oplus (-4)^2$ & $6A_1 + A_3 +2D_4$ \\ \hline
25 & $C_4^2$ & $ \left(\begin{array}{cccc} 4& 0 & 2 &0\\ 0&4& 2&0\\ 2& 2&4&4\\ 0&0&4&0 \end{array}\right)$ & $6A_3$ \\ \hline
26 & $SD_{16}$ & $U(8)\oplus (2)\oplus (4)$ & $4A_1 + A_3 + A_7 + D_4$ \\ \hline
27 & $C_2 \times Q_8$ & $ \left(\begin{array}{cccc} 4& 0 & 2 &0\\ 0&4& 2&0\\ 2& 2&4&4\\ 0&0&4&0 \end{array}\right)$ & $2A_1 + 4D_4$ \\ \hline

\end{tabular}
\end{table}

\newpage

\begin{table}[h!]
\begin{tabular}{|c|c|c|c|}
\hline
Label & Group & $T_G(K3)$& singularities in the quotient\\ \hline
28 & $\Gamma_2 d$ & $(4) \oplus (8)^2$ & $2A_1+A_3+2A_7$ \\ \hline
29 & $Q_{16}$ & $(4) \oplus (8)^2$  & $A_3 + D_4 + 2D_6$  \\ \hline
30 & $A_{3,3}$ & $U(3)^2\oplus \left(\begin{array}{cc} 2 & 3 \\ 3 & 0 \end{array}\right)$ & $8A_1 + 4A_2$ \\ \hline
31 & $C_3 \times D_6$ & $U(3)\oplus A_2(6)$ &  $2A_1 + 3A_2 + 2A_5$ \\ \hline
32 & $Hol(C_5)$ & $U(5)\oplus \left(\begin{array}{cc} 4 & 2 \\ 2 & 6 \end{array}\right)$ & $2A_1 + 4A_3 + A_4$ \\ \hline
33 & $C_7 \rtimes C_3$ & $U(7)\oplus \left(\begin{array}{cc} 2 & 1 \\ 1 & 4 \end{array}\right)$ & $6A_2 + A_6$ \\ \hline
34 & $S_4$ & $U\oplus A_2(4)\oplus -12$& \(2A_3+3A_2+5A_1\) \\ \hline
35 & $C_2 \rtimes A_4$ & $U(2)\oplus (12)^2$ & $4A_1 + 2A_2 + 2A_5$ \\ \hline
36 & $C_3 \rtimes D_8$ & $U(3)\oplus A_2(4)$ & $5A_1 + A_3 + A_5 + D_5$ \\ \hline
37 & $T_{24}$ & $ \left(\begin{array}{ccc} 4 & 0 &0\\ 0&8&4\\ 0&4&8 \end{array}\right)$ & $2A_2 + A_5 + D_4 + E_6$ \\ \hline
38 & $T_{24}$ & $ \left(\begin{array}{ccc} 2 & 0 &0\\ 0&16&8\\ 0&8&16 \end{array}\right)$ & $2A_2 + A_3 + 2E_6$ \\ \hline
39 & $2^4 C_2$  & $U(2)\oplus (4)\oplus (-4)\oplus (8)$ & $8A_1 + 3 A_3$ \\ \hline
40 & $Q_8 * Q_8$ & $(4)^3\oplus (-4)^2$ & $9A_1 + 2D_4$ \\ \hline
41 & $\Gamma_7 a_1$ & $(4)^3\oplus (-8)$ & $3A_1 + 5A_3$ \\ \hline

42 & $\Gamma_4 c_2$ & $ \left(\begin{array}{cccc} 4& 0 & 2 &0\\ 0&4& 2&0\\ 2& 2&4&4\\ 0&0&4&0 \end{array}\right)$ & $2D_4+2A_3+4A_1$  \\ \hline
43 & $\Gamma_7 a_2$ & $(4) \oplus (8)^2$ & $2A_7+5A_1$  \\ \hline
44 & $\Gamma_3 e$ & $(4) \oplus (8)^2$ & $D_4+A_7+2A_3+2A_1$  \\ \hline
45 & $\Gamma_6 a_2$ & $(4) \oplus (8)^2$ & $ 2D_6+D_4+3A_1$  \\ \hline
46 & $3^2C_4$ & $A_2\oplus (6)\oplus (-18)$ & $4A_3+2A_2+2A_1$  \\ \hline
47 & $C_3\times A_4$ & $U(3)\oplus A_2(4)$ & $A_5+6A_2+A_1$  \\ \hline
48 & $S_{3,3}$ & $U(3)\oplus A_2(6)$ & $ 2A_5+A_2+6A_1+$  \\ \hline
49 & $2^4C_3$ & $U(2)\oplus A_2(2)\oplus (-8)$ & $ 6A_2+5A_1$  \\ \hline
50 & $4^2C_3$ & $ \left(\begin{array}{cccc} 4& 0 & 2 &0\\ 0&4& 2&0\\ 2& 2&4&4\\ 0&0&4&0 \end{array}\right)$ & $ 2A_3+6A_2$  \\ \hline
51 & $C_2\times S_4$ & $U(2)\oplus (12)^2$ & $A_5+2A_3+A_2+5A_1$  \\ \hline

\end{tabular}
\end{table}

\newpage

\begin{table}[h!] 
\begin{tabular}{|c|c|c|c|}
\hline
Label & Group & $T_G(K3)$& singularities in the quotient\\ \hline

52 & $ 2^2(C_2\times C_6)$ & $ \left(\begin{array}{ccc} 8 & 4 &4\\ 4&8&2\\ 4&2&8 \end{array}\right)$ & $3A_5+4A_1$  \\ \hline
53 & $2^2Q_{12}$ & $ \left(\begin{array}{ccc} 8 & 4 &4\\ 4&8&2\\ 4&2&8 \end{array}\right)$ & $2D_5+2A_3+A_2+A_1$  \\ \hline
54 & $T_{48}$ & $ \left(\begin{array}{ccc} 2 & 0 &0\\ 0&16&8\\ 0&8&16 \end{array}\right)$ & $E_6+A_7+A_2+4A_1$  \\ \hline
  55 & $A_5$ & $U\oplus A_2(10)$& \(2A_4+3A_2+4A_1\)\\ \hline
56 & $\Gamma_{25} a_1$ & $(4)^3\oplus (-8)$ & $D_4+3A_3+5A_1 $  \\ \hline
57 & $\Gamma_{13}a_1$ & $ \left(\begin{array}{cccc} 4& 0 & 2 &0\\ 0&4& 2&0\\ 2& 2&4&4\\ 0&0&4&0 \end{array}\right)$ & $3D_4+6A_1$  \\ \hline
58 & $\Gamma_{22}a_1 $ & $(4) \oplus (8)^2$ & $A_7+3A_3+3A_1 $  \\ \hline
59 & $\Gamma_{23}a_2$ & $(4) \oplus (8)^2$ & $D_4+5A_3$  \\ \hline
60 & $\Gamma_{26}a_2$ & $(4) \oplus (8)^2$ & $2D_6+A_3+4A_1$  \\ \hline
61 & $A_{4,3}$ & $U(3)\oplus A_2(4)$ & $D_5+A_3+3A_2+4A_1$  \\ \hline
62 & $N_{72}$ & $ \left(\begin{array}{ccc} 6 & 3 &3\\ 3&6&3\\ 3&3&12 \end{array}\right)$ & $2A_5+2A_3+3A_1$  \\ \hline
63 & $M_9$ & $ \left(\begin{array}{ccc} 2 & 0 &0\\ 0&12&6\\ 0&6&12 \end{array}\right)$ & $2D_4+3A_3+A_2$  \\ \hline
64 & $2^4C_5$ & $ \left(\begin{array}{ccc} 4 & 0 &2\\ 0&4&2\\ 2&2&12 \end{array}\right)$ & $4A_4+3A_1$  \\ \hline
65 & $2^4D_6$ & $A_2(2)\oplus (4)\oplus (-8)$ & $3A_3+3A_2+3A_1$  \\ \hline
66 & $2^4C_6$ & $(4)\oplus (8)\oplus (12)$ & $2A_5+A_3+2A_2+2A_1$  \\ \hline
67 & $4^2D_6$ & $(4) \oplus (8)^2$ & $D_4+A_7+3A_2+2A_1$  \\ \hline
68 & $2^3 D_{12}$  & $ \left(\begin{array}{ccc} 8 & 4 &4\\ 4&8&2\\ 4&2&8 \end{array}\right)$ & $D_5+A_5+2A_3+3A_1$ \\ \hline

\end{tabular}
\end{table}

\newpage

\begin{table}[h!]\label{tab:allgroups}
\begin{tabular}{|c|c|c|c|}
\hline
Label & Group & $T_G(K3)$& singularities in the quotient\\ \hline

69 & $(Q_8 * Q_8) \rtimes C_3$  & $ \left(\begin{array}{ccc} 4 & 0 &0\\ 0&8&4\\ 0&4&8 \end{array}\right)$ & $2E_6 + 2A_2 + 3A_1$ \\ \hline
70 & $S_5$ & $ \left(\begin{array}{ccc} 4 & 1 &0\\ 1&4&0\\ 0&0&20 \end{array}\right)$ or $ \left(\begin{array}{ccc} 4 & 2 &2\\ 2&6&1\\ 2&1&16 \end{array}\right)$ & $A_5+A_4+2A_3+A_2+2A_1$  \\ \hline
71 & $F_{128}$ & $(4) \oplus (8)^2$ & $D_6 + D_4 + 2A_3 + 3A_1$ \\ \hline
72 & $A_4^2$ & $ \left(\begin{array}{ccc} 8 & 4 &4\\ 4&8&2\\ 4&2&8 \end{array}\right)$ & $2A_5+4A_2+A_1$ \\ \hline
73 & $2^4D_{10}$ & $ \left(\begin{array}{ccc} 4 & 0 &2\\ 0&4&2\\ 2&2&12 \end{array}\right)$ & $2A_4+3A_3+2A_1$ \\ \hline
74 & $L_2(7)$ & $ \left(\begin{array}{ccc} 2 & 1 &0\\ 1&4&0\\ 0&0&28 \end{array}\right)$ or $ \left(\begin{array}{ccc} 4 & 2 &2\\ 2&8&1\\ 2&1&8 \end{array}\right)$ & $A_6+2A_3+3A_2+A_1$ \\ \hline
75 & $4^2A_4$ & $ \left(\begin{array}{cccc} 4& 0 & 2 &0\\ 0&4& 2&0\\ 2& 2&4&4\\ 0&0&4&0 \end{array}\right)$ & $D_4+6A_2+2A_1$ \\ \hline
76 & $H_{192}$ & $(4)\oplus (8)\oplus (12)$ & $D_4+A_5+2A_3+A_2+2A_1$ \\ \hline
77 & $T_{192}$ & $ \left(\begin{array}{ccc} 4 & 0 &0\\ 0&8&4\\ 0&4&8 \end{array}\right)$ & $E_6+3A_3+A_2+2A_1$  \\ \hline
78 & $A_{4,4}$ & $ \left(\begin{array}{ccc} 8 & 4 &4\\ 4&8&2\\ 4&2&8 \end{array}\right)$ & $2D_5+A_3+2A_2+2A_1$ \\ \hline
79 & $A_6$ & $ \left(\begin{array}{ccc} 2 & 1 &0\\ 1&8&0\\ 0&0&12 \end{array}\right)$ or $ \left(\begin{array}{ccc} 6 & 0 &3\\ 0&6&3\\ 3&3&8 \end{array}\right)$ & $2A_4+2A_3+2A_2+A_1$ \\ \hline
80 & $F_{384}$ & $(4) \oplus (8)^2$ & $D_6+2A_3+3A_2+A_1$  \\ \hline
81 & $M_{20}$ & $ \left(\begin{array}{ccc} 4 & 0 &2\\ 0&4&2\\ 2&2&12 \end{array}\right)$ & $D_4+2A_4+3A_2+A_1$ \\ \hline

\end{tabular}
\vspace{10pt}
\caption{Symplectic automorphisms of K3 surfaces}
\end{table}


\section*{Appendix 2: Fujiki's classification of automorphisms of abelian surfaces}

We briefly recall the classification of finite symplectic automorphisms of Abelian surfaces, initially worked out by Fujiki \cite{Fuj88}. From the classification, we exclude translations and look at the induced action on the second cohomology. As $-1$ always acts trivially on the second cohomology, we will always assume that $-1\in G$. We will consider the associated faithful action of $\widetilde{G}=G/\pm1$ on $H^2(A,\ZZ)$.  
As finite symplectic automorphisms preserve a K\"ahler class, the symplectic form and its conjugate, it follows that the invariant lattice $T_G(A):=H^2(A,\ZZ)^G$ has signature $(3,r)$ and its orthogonal complement, the coinvariant lattice $S_G(A)$, is negative definite and of rank at most three. From this we have the following straightforward proposition:
\begin{proposition}
Let $\widetilde{G}\subset O(H^2(A))$ be a finite group of symplectic isometries coming from symplectic automorphisms, then $\widetilde{G}\subset SO(E_8)$ and $S_{\widetilde{G}}(X)\cong S_{\widetilde{G}}(E_8)(-1)$.
\end{proposition}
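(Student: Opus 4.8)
The plan is to split the statement into an orientation claim, $\widetilde G\subset \SO(E_8)$, and a lattice claim, $S_{\widetilde G}(A)\cong S_{\widetilde G}(E_8)(-1)$, and to obtain both from the representation of $\widetilde G$ on $H^1(A)$ combined with Nikulin's theory of primitive embeddings.

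First I would record the determinant computation. Since translations have been excluded, every element $g\in G$ is a group automorphism of $A$, hence acts on $H^1(A,\ZZ)\cong\ZZ^4$ preserving the complex structure; being symplectic it acts trivially on $H^{2,0}(A)=\wedge^2 H^{1,0}(A)$, so $\det_{\CC}(g\mid H^{1,0})=1$ and therefore $\det_{\RR}(g\mid H^1(A,\RR))=1$ (the complexified representation is $H^{1,0}\oplus\overline{H^{1,0}}$, with determinant $\lambda\bar\lambda=1$). Because $H^2(A)=\wedge^2 H^1(A)$ and $\det(\wedge^2 g)=\det(g)^{3}$ for $g\in\SL_4$, the induced action on $H^2(A)\cong U^3$ has determinant $+1$. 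As $-1$ acts trivially on $H^2$, the faithful quotient $\widetilde G=G/\{\pm1\}$ already lands in $\SO(U^3)$.

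Next I would feed in the signature data. Because $T_G(A)$ has signature $(3,r)$, the group $\widetilde G$ fixes the entire positive-definite part and acts non-trivially only on the even, negative-definite coinvariant lattice $S_{\widetilde G}(A)$ of rank at most three (evenness is inherited from the even unimodular lattice $U^3$). Since the action is trivial on the invariant complement, the determinant $+1$ from the previous step is concentrated on $S_{\widetilde G}(A)$, so $\widetilde G\hookrightarrow \SO\bigl(S_{\widetilde G}(A)\bigr)$ faithfully. I would then realize this pair inside $E_8$: the negated lattice $S_{\widetilde G}(A)(-1)$ is positive-definite, even, of rank $\le 3$, with discriminant length $\ell\le 3$, so $\mathrm{rank}+\ell\le 6<8$, and Nikulin's existence and uniqueness criterion yields a primitive embedding $S_{\widetilde G}(A)(-1)\hookrightarrow E_8$. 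Declaring $\widetilde G$ to act trivially on the orthogonal complement extends the action to $E_8$; this extension lies in $\SO(E_8)$ because it is trivial on the complement and has determinant $+1$ on $S_{\widetilde G}(A)(-1)$. By construction the fixed lattice is exactly the complement, so its coinvariant lattice is $S_{\widetilde G}(E_8)=S_{\widetilde G}(A)(-1)$, i.e. $S_{\widetilde G}(A)\cong S_{\widetilde G}(E_8)(-1)$.

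I expect the main obstacle to be the \emph{equivariance} of the last step: one must ensure not only that $S_{\widetilde G}(A)(-1)$ embeds primitively, but that the $\widetilde G$-action extends to $E_8$ with fixed lattice \emph{equal} to the orthogonal complement, so the coinvariant lattices genuinely coincide rather than merely embed. I would settle this by verifying that the gluing across the two discriminant forms is $\widetilde G$-equivariant (the induced action on the discriminant of the complement being trivial), invoking Nikulin's equivariant embedding results. As an independent cross-check, Fujiki's classification gives an explicit finite list of the possible $\widetilde G$ together with their coinvariant lattices, so the isometry can also be confirmed case-by-case against the finite subgroups of $\SO(E_8)$ and their coinvariant lattices arising in Nikulin's symplectic $K3$ computations.
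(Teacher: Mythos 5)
Your argument is correct and is essentially the one the paper has in mind: the paper gives no written proof, declaring the proposition straightforward from the preceding observation that $S_{\widetilde{G}}(A)$ is even, negative definite and of rank at most three (and pointing to the analogous Proposition 5.2 of \cite{MTW}), and your determinant computation via $H^2=\wedge^2 H^1$ together with a primitive embedding of $S_{\widetilde{G}}(A)(-1)$ into $E_8$, extending the action by the identity on the complement, supplies exactly the missing details. One small precision: for the glued action to preserve the overlattice $E_8$ you need the $\widetilde{G}$-action to be trivial on the discriminant group of $S_{\widetilde{G}}(A)(-1)$ itself, not merely on that of its complement; this holds because $S_{\widetilde{G}}(A)$ is a coinvariant lattice inside the unimodular lattice $H^2(A,\ZZ)\cong U^3$, whose glue identifies its discriminant group with that of $T_{\widetilde{G}}(A)$, where the action is trivial.
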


The above proposition is analogous to \cite[Proposition 5.2]{MTW} in the generalized Kummer fourfolds case.
Moreover we have the following converse:
\begin{proposition}\label{prop:e8tosympl}
Let $\widetilde{G}\subset SO(E_8)$ be a group and suppose $S_{\widetilde{G}}(E_8(-1))$ has rank at most 3. Then $\widetilde{G}$ is induced by a group $G$ of symplectic automorphisms of some abelian surface $A$ such that $G/\pm 1=\widetilde{G}$. 
\end{proposition}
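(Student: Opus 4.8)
The plan is to realize $\widetilde{G}$ geometrically via the surjectivity of the period map together with the Torelli theorem for abelian surfaces, after first transporting the given lattice action onto $H^2(A,\ZZ)\cong U^3$. Recall that for an abelian surface $A$ one has $H^2(A,\ZZ)\cong U^3$, of signature $(3,3)$, and that automorphisms fixing the origin act faithfully on $H^1(A,\ZZ)\cong\ZZ^4$, inducing on $H^2(A,\ZZ)=\wedge^2 H^1(A,\ZZ)$ an action whose kernel is exactly $\{\pm 1\}$; this is the source of the quotient $G/\pm 1=\widetilde{G}$. Writing $r=\mathrm{rk}\,S_{\widetilde{G}}(E_8(-1))\le 3$, the invariant lattice we must produce will have signature $(3,3-r)$, so its positive part is $3$-dimensional — this is exactly what leaves room for an invariant holomorphic two-form together with an invariant K\"ahler class, and is where the hypothesis $r\le 3$ enters.

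First I would carry out the lattice realization. Set $S=S_{\widetilde{G}}(E_8(-1))$, a negative definite even lattice of rank $r\le 3$ carrying the given $\widetilde{G}$-action. Because $S$ is the coinvariant lattice inside the unimodular lattice $E_8(-1)$, its discriminant group $A_S$ is anti-isometric to that of the invariant complement, on which $\widetilde{G}$ acts trivially; hence $\widetilde{G}$ acts trivially on $A_S$. Since $\ell(A_S)\le r$, we have $\mathrm{rk}\,S+\ell(A_S)\le 6$, so by Nikulin's criterion $S$ admits a primitive embedding into the unique even unimodular lattice of signature $(3,3)$, namely $U^3$, with orthogonal complement $T'$ of signature $(3,3-r)$ and $q_{T'}\cong -q_S$. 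Extending $\widetilde{G}$ by the trivial action on $T'$, the triviality of $\widetilde{G}$ on both $A_S$ and $A_{T'}$ makes the gluing $\widetilde{G}$-equivariant, so the action extends to $U^3$ with coinvariant lattice $S$ and invariant lattice $T'$.

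Next I would construct the abelian surface. Choose a period $\omega\in T'\otimes\CC$ with $\omega^2=0$, $\omega\bar\omega>0$, generic in $T'$ so that the only integral $(1,1)$-classes lie in $T'$; since $T'$ has signature $(3,3-r)$, after the positive two-plane spanned by $\mathrm{Re}\,\omega,\mathrm{Im}\,\omega$ there remains a positive direction inside $T'\cap H^{1,1}$, furnishing a polarization and hence an honest projective abelian surface $A$ with $H^2(A,\ZZ)\cong U^3$ carrying this Hodge structure. As $\widetilde{G}$ acts trivially on $T'\ni\omega$, the entire $\widetilde{G}$-action is by Hodge isometries fixing $\omega$, and in particular it is symplectic. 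It remains to realize these isometries by automorphisms: composing if necessary with $-1$ and using that an abelian surface carries no $(-2)$-classes (so that the K\"ahler cone is an entire component of the positive cone and effectivity is automatic after the sign fix), each element of $\widetilde{G}$ is an effective Hodge isometry, hence by the Torelli theorem for abelian surfaces (Shioda) is induced by an automorphism of $A$ up to sign. Taking $G$ to be the preimage of $\widetilde{G}$ in $\Aut(A,0)$ under the map to $O(H^2)$ yields a finite group of symplectic automorphisms with $G/\pm 1=\widetilde{G}$, as required.

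The main obstacle I expect is this final realization step: coordinating the purely lattice-theoretic $\widetilde{G}$-action on $U^3$ with an actual complex structure so that the isometries are \emph{effective} and genuinely induced by automorphisms rather than merely abstract Hodge isometries. Concretely, one must check that every element of $\widetilde{G}$ lands in the image of $\Aut(A,0)/\pm 1\hookrightarrow O(H^2)$; this is a spinor-norm/orientation condition on $O(U^3)\cong O(\wedge^2\ZZ^4)$, and it is precisely here that the hypothesis $\widetilde{G}\subset\SO(E_8)$ (special orthogonal, preserving the orientation of the positive three-space) should be invoked to guarantee the lift to $\SL(H^1)$ up to sign. By comparison, the primitive embedding of $S$ and the genericity of the period are routine.
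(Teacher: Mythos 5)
The paper itself gives no proof of this proposition: it is stated as a known converse, and the justification implicit in the appendix is case-by-case --- the H\"ohn--Mason theorem quoted immediately afterwards reduces $\widetilde{G}$ to the Coxeter group of a Dynkin sublattice of $E_8$ of rank at most $3$, and each of the finitely many resulting pairs (listed in Table 3) is realized by one of Fujiki's explicit constructions on $E\times E$ and its relatives, recalled from \cite{Fuj88} at the end of the appendix. Your uniform route --- transport the action from $E_8(-1)$ to $U^3\cong H^2(A,\ZZ)$ by an equivariant primitive embedding \`a la Nikulin, choose a generic invariant period, and invoke Torelli --- is the natural conceptual alternative, and its first two stages are essentially sound: the triviality of the $\widetilde{G}$-action on the discriminant group of the coinvariant lattice and the equivariant gluing are correct (though you should address the boundary case $\mathrm{rk}\,S=\ell(A_S)=3$ of Nikulin's embedding criterion, where existence of the complement is not automatic; for the root lattices actually arising this is a finite check).

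The genuine gap is exactly where you predict one: the final realization step is asserted, not proved. For abelian surfaces there is no global Torelli theorem for $H^2$ of the form you invoke. Shioda's theorem recovers $A$ only up to replacing it by its dual, and --- more to the point --- what you need is that every element of $\widetilde{G}\subset SO(U^3)$ fixing the period and a K\"ahler class lies in the image of $\mathrm{Aut}(A,0)\to O(H^2(A,\ZZ))$, i.e.\ in the image of $\wedge^2\colon SL_4(\ZZ)\to SO(\wedge^2\ZZ^4)$. This is the integral form of the exceptional isogeny $SL_4\to SO(3,3)$; the image is a proper subgroup of $SO(U^3)$ cut out by a spinor-norm type condition, and you never verify that your isometries satisfy it --- saying that the hypothesis $\widetilde{G}\subset SO(E_8)$ ``should be invoked'' at this point is a statement of intent, not an argument. (Two smaller slips in the same passage: composing with $-1\in\mathrm{Aut}(A)$ does nothing on $H^2$, since $-1$ acts trivially there, so the ``sign fix'' familiar from K3 surfaces has no analogue here; and once an integral lift to $SL_4(\ZZ)$ exists, preservation of the weight-one Hodge structure does follow from $\tilde g(\omega)=\omega$ via $H^{1,0}=\{v: v\wedge\omega=0\}$, so the complex-structure compatibility is not the issue --- the existence of the lift is.) Until that lifting criterion is established, or one falls back on the explicit case-by-case realization via \cite{HM15} and \cite{Fuj88} as the paper implicitly does, the proposition is not proved.
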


To conclude, we can use the following combinatorial criterion:

\begin{theorem}\cite[Thm. 3.6]{HM15}
Let $G$ be a subgroup of $O(E_8)$ which is the stabilizer of some sublattice of $E_8$. Then $G$ is the Coxeter group of a Dynkin sublattice of $E_8$ and $S_G(E_8)$ is the above said Dynkin lattice. 
\end{theorem}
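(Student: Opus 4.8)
The plan is to identify $G$ with the Weyl group of the root subsystem of $E_8$ carried by its coinvariant lattice, exploiting that $O(E_8)=W(E_8)$ is a finite reflection group whose reflections are exactly the root reflections $s_\alpha$ with $(\alpha,\alpha)=2$. Write $T=E_8^G$ for the invariant lattice and $S=S_G(E_8)=T^\perp\cap E_8$ for the coinvariant lattice; both are primitive and mutually orthogonal, and $G$ acts trivially on $T$ and faithfully on $S$.

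First I would establish the inclusion $G\subseteq W(R)$, where $R=\{\alpha\in S:(\alpha,\alpha)=2\}$ is the set of roots of $E_8$ lying in $S$. Every element of $G$ fixes $T$ pointwise, so $G$ is contained in the pointwise stabilizer of $T$ in $W(E_8)$; by Steinberg's fixed-point theorem this pointwise stabilizer is generated by the reflections it contains, and a root reflection $s_\alpha$ fixes $T$ pointwise precisely when $\alpha\perp T$, i.e.\ $\alpha\in S$. Hence the pointwise stabilizer of $T$ equals $W(R)$ and $G\subseteq W(R)$. Comparing invariant lattices ($E_8^{W(R)}=R^\perp\cap E_8=T$) forces $R$ to span $S\otimes\QQ$, so $R$ is a root subsystem of $E_8$ of full rank in $S$; by the Borel--de Siebenthal/Dynkin classification of root sublattices of $E_8$ it is of $ADE$ type, and its Coxeter group is $W(R)$.

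The heart of the argument, and the main obstacle, is the reverse inclusion $W(R)\subseteq G$, i.e.\ showing that the stabilizer is genuinely \emph{generated by reflections} and equals the full Weyl group $W(R)$ rather than some proper subgroup. This is exactly where the hypothesis $G=\mathrm{Stab}(L)$ for an honest sublattice $L$ must be invoked. I would pass to the primitive closure $\overline L=(L\otimes\QQ)\cap E_8$ and its orthogonal complement, and then show that each root reflection $s_\alpha$ with $\alpha\in R$ preserves $L$: since $s_\alpha$ already fixes $T$ pointwise, only its action on the $S$-component must be controlled, and here I would apply Nikulin's theory of discriminant forms and gluing to the finite-index overlattice $T\oplus S\subseteq E_8$ to see that the reflections in $R$ act compatibly and stabilize $L$. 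This step simultaneously forces $S$ to be spanned \emph{over $\ZZ$}, not merely over $\QQ$, by $R$.

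Finally I would identify $S_G(E_8)$ with the Dynkin lattice. The coinvariant lattice of the reflection group $W(R)$ is the primitive closure in $E_8$ of the root lattice $Q(R)$; the previous step shows this closure is $Q(R)$ itself, so $S_G(E_8)=Q(R)$ is a Dynkin sublattice and $G=W(R)$ is its Coxeter group, as claimed. The two delicate points—the reverse inclusion (that the stabilizer is a full reflection group) and the integral primitivity of $Q(R)$—are the places where Steinberg's theorem alone is insufficient and one must combine it with Nikulin's gluing calculus together with the explicit list of root sublattices of $E_8$.
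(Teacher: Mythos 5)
The paper offers no proof of this statement at all: it is imported verbatim from H\"ohn--Mason \cite[Thm.~3.6]{HM15} and used as a black box, so there is no in-paper argument to compare yours against. Judged on its own terms, your skeleton --- $O(E_8)=W(E_8)$, Steinberg's fixed-point theorem applied to the pointwise stabilizer of $T=E_8^G$, and the classification of root sublattices of $E_8$ --- is the right one, but you have misplaced the difficulty: the step you call the heart of the argument is trivial, while the step that genuinely needs an argument is asserted rather than proved.

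On the reverse inclusion $W(R)\subseteq G$: for the statement to be true at all, ``stabilizer'' must mean the \emph{pointwise} stabilizer (the setwise stabilizer of $\ZZ\alpha$ for a single root $\alpha$ is $W(A_1)\times W(E_7)$, whose coinvariant lattice is all of $E_8$ while the group is not $W(E_8)$, contradicting the conclusion). Granting that reading, the hypothesis is used in one line: $G=\mathrm{Stab}(L)$ fixes $L$ pointwise, so $L\subseteq E_8^G=T$, and every reflection $s_\alpha$ with $\alpha\in R\subseteq T^{\perp}$ fixes $T\supseteq L$ pointwise, hence lies in $\mathrm{Stab}(L)=G$. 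There is no ``$S$-component of $L$'' to control, and the proposed detour through Nikulin's discriminant-form calculus is solving a nonexistent problem; as written it reads like a conflation of setwise and pointwise stabilizers. The genuine gap is elsewhere: your claim that this gluing step ``simultaneously forces $S$ to be spanned over $\ZZ$ by $R$'' does not follow from anything preceding it. What must actually be shown is that $S_G(E_8)$, i.e.\ the primitive closure of $Q(R)$ in $E_8$, is again generated by its norm-$2$ vectors --- equivalently, that a full-rank root sublattice of a rank-$\le 7$ primitive sublattice of $E_8$ admits no proper even overlattice without new roots. This holds because every nonzero isotropic element of the discriminant group of a root lattice of rank $\le 7$ has minimal norm exactly $2$ (for $A_n$ the coset $[j]$ has minimal norm $j(n+1-j)/(n+1)\le (n+1)/4\le 2$, and similarly for $D_n$, $E_6$, $E_7$ and for direct sums of total rank $\le 7$), so any even gluing introduces a root; the rank-$8$ case is trivial since the only primitive rank-$8$ sublattice is $E_8$ itself. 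Without this verification --- which is exactly the case check against the Borel--de Siebenthal/Dynkin list that you mention only in your closing sentence --- the identification $S_G(E_8)=Q(R)$, and hence the assertion that $G$ is the Coxeter group of the Dynkin lattice $S_G(E_8)$, is not established.
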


In the following table (cf. Tables in section 5 of in \cite{MTW}), we list all these lattices up to rank 3, together with the group of automorphisms induced by their determinant one isometries. To denote the groups, we use the following notation: $n^m$ denotes the cartesian product of $m$ cyclic groups of order $n$, $Q_8$ denotes the quaterion subgroup, $T_{24}$ the binary tetrahedral group, and $D$ the binary dihedral group of order 12.

\vspace{5pt}
\begin{table}[h!]
\begin{tabular}{|c|c|c|c|c|}\hline
Rank $T_G(A)$ & $G$ & $T_G(A)$ & $S_G(A)$ & Singularities of the quotient\\\hline
2 & $C_4$ & $U\oplus A_1^2(-1)$ & $A_1^2$ & $6A_1+4A_3$\\\hline
2 & $C_6$ & $U\oplus A_2(-1)$ & $A_2$ & $5A_1+4A_2+A_5$ \\\hline
3 & $Q_8$ & $A_1(-1)^3$ & $A_1^3$ & $3A_1+4D_4$\\\hline
3 & $Q_8$ & $A_1(-1)^3$ & $A_1^3$ & $2A_1+3A_3+2D_4$\\\hline
3 & $D$ & $A_1(-1) \oplus A_2(-1)$ & $A_1 \oplus A_2$ & $A_1+2A_2+3A_4+D_5$ \\\hline
3 & $T_{24}$ & $A_3(-1)$ & $A_3$ & $A_1+4A_2+d_4+E_6$ \\\hline

\end{tabular}
\vspace{10pt}
\caption{Relevant finite symplectic automorphisms of Abelian surfaces}
\end{table}

An explicit geometric description of these case is also available, we recall it from \cite[Proposition 3.7]{Fuj88}:
For $G=C_4,C_6$, the surfaces can be deformed to a pair $E\times E$, where $E$ is an elliptic curve. The action of order four is given by $(e,f)\to (-f,e)$, while the action of order six is given by $(e,f)\to (e+f,-e)$.
For the quaternionic group, we can take the elliptic curve $E_i$ with an automorphism $\varphi$ of order four and consider the group of automorphisms of $E_i\times E_i$ generated by $(e,f)\to (-f,e)$; $(e,f)\to (\varphi(f),-e)$ and $(e,f)\to (-f,\varphi(e))$. The two cases for the group $Q_8$ are obtained from \cite[Theorem 3.11 and Remark 3.12]{Fuj88}, they correspond to two one-dimensional families of dual abelian surfaces.


\subsection*{Acknowledgements}
We would like to thank G. Oberdieck for comments on the paper.
The first named author is partially supported by a grant from the 
Simons Foundation/SFARI (522730, LK). 
The second named author was supported by PRIN2020 research grant
”2020KKWT53” and is a member of the INDAM-GNSAGA.
The third named author was supported by  NSF-FRG grant 1760373 and
NSF grant 2200798.

We thank the SCGP, Stony Brook University at which some of the research 
for this paper was performed during the hyperk\"ahler program in January/ 
February of 2023.


\end{document}